\documentclass[11pt,reqno]{amsart}
\usepackage{graphicx}
\usepackage{wrapfig}
\usepackage{fancyhdr}
\usepackage{amsmath,amssymb,latexsym,verbatim}
\newtheorem{teo}{Theorem}

\newtheorem{lem}[teo]{Lemma}

\lhead[\thepage]{}
\chead[Inscribed and circumscribed polygons]{C. Ben\'itez, P. Mart\'{\i}n, and D. Y\'{a}\~{n}ez}
\rhead[]{\thepage}

\lfoot[]{}
\cfoot[]{}
\rfoot[]{}

\pagestyle{fancy}
\begin{document}

\title{Inscribed and circumscribed polygons that characterize inner product spaces}

\author[C. Ben\'itez]{Carlos Ben\'itez}
\author[P. Mart\'in]{Pedro Mart\'in}
\author[D. Y\'a\~nez]{Diego Y\'a\~nez}
\address{Departamento de Matem\'aticas, Universidad de
Extremadura. 06071 Badajoz, Spain}
 \email{\dag \hspace{0.5pt} on March 7th 2014, pjimene@unex.es, dyanez@unex.es}
 \keywords{characterization, normed spaces, inner product}
\subjclass[2010]{46B20, 46C15, 52A10, 52A21} 
\thanks{Partially supported by Junta de Extremadura grant GR15055 (partially financed with FEDER)}

\date{}

\begin{abstract}

Let $X$ be a real normed space with unit sphere S. We prove that $X$ is an inner product space if and only if there exists  a real number $\rho=\sqrt{(1+\cos\frac{2k\pi}{2m+1})/2}$, $(k=1,2,\ldots , m
;\:m=1,2,\ldots)$, such that every chord of $S$ that supports $\rho S$ touches $\rho S$ at its middle point. If this condition  holds, then every point $u\in
S$ is a vertex of a regular polygon that is inscribed in $S$ and circumscribed about $\rho S$.
\end{abstract}

\maketitle

\section{Introduction and notation}

Let $X$ be a real normed space with unit ball $B$ and unit sphere $S$.  $X$ is an inner product space (i.p.s.) if and only if every chord of $S$
supports a sphere homothetic to $S$ at its middle point, namely, if it  fulfils the ``nonbias''condition

$$u,v\in S \: \Rightarrow \: \inf_{t\in[0,1]}\|(1-t)u+tv\|=
\|\tfrac12u+\tfrac12v\|.$$

\noindent (\cite{GS}; see \cite{Am},  p. 29, where
this result is used to establish many characterizations of
i.p.s.).
But in order to characterize an i.p.s. we can only consider the chords of $S$  that supports $\rho S$ 
at its middle point  for some $\rho\in (0,1)$. Namely, given the following property (P-$\rho S$ from now on)

\begin{equation*} u,v\in S,\: \inf_{t\in[0,1]}\|(1-t)u+tv\|=\rho \:
\Rightarrow \: \tfrac12 u+\tfrac12 v\in \rho S, \tag{P-$\rho S$}
\end{equation*}

\noindent
$X$ is an i.p.s.\hspace{-3pt} if and only if (P-$\rho S$) holds for $\rho=\tfrac12$ (see \cite{BY1}) or
for any real number $\rho$ such that (see \cite{BY2})
$$
0<\rho<1, \quad
\rho\neq\sqrt{(1+\cos\tfrac{2k\pi}{n})/2},\quad
(2k<n;\:n=3,4,...).
$$



The aim of  this paper is to prove (Theorem \ref{teo}) that
$X$ is an i.p.s. if and only if  (P-$\rho S$) holds for a real number on the set

$$
M=\left\{ \rho\in (0,1) / \, \rho=\sqrt{(1+\cos\tfrac{2k\pi}{2m+1})/2}:\; k=1,2,\ldots , m;\:m=1,2,\ldots \right\}.
$$

%

It is known that $X$ is an i.p.s.\hspace{-6pt} if and only if so are its 2-dimensional subspaces.
This fact and the nature of the property (P-$\rho S$)  allow us
to consider that $X$ is a real 2-dimensional space from now on.

Given $u=(u_1,u_2)$ and $v=(v_1,v_2)$ in $X$, $[u,v]$ denotes the segment meeting $u$ and $v$, and
$u\prec v$ means that $u$ precedes $v$ in the positive orientation (counterclockwise) of
$X$, i.e., the following expression is positive:
$$
u\wedge v:=u_1v_2-u_2v_1.
$$

We say that $u$ is orthogonal to $v$ in the sense of Birkhoff (\cite{Bi}, \cite{Ja}), denoted by  $u\perp v$, if
$$
\|u\|\leq\|u+\lambda v\|\quad \forall \lambda\in\mathbb{R}.
$$
In other words, $u\perp v$ if and only if the homothetic copy of $S$ with scale factor $\|u\|$ is supported by the line $\{u+\lambda v:\lambda\in\mathbb{R}\}$ at $u$.
If  (P-$\rho S$) holds, and $[u,v]$ supports $\rho S$ for some $u,v\in S$, then  $u+v\perp v-u$.

 If $u,v\in S$, $u\prec v$, 
$B_{u}^{v}$ denotes the sector of $B$ bounded by $u$, $v$, and the arc of $S$ from $u$ to $v$ (counterclockwise); and $T_{u}^{v}$ denotes the subset of $B_{u}^{v}$ bounded by $[u,v]$ and the arc.
If $\mathcal{A}(C)$ denotes the area of a set $C$, since  $u\wedge v$ is the area of the parallelogram defined by $u$ and $v$, then obviously $\mathcal{A}(B_u^v)=\frac12 (u\wedge v) + \mathcal{A}(T_u^v)$.

This paper is organized as follows. Some preliminary lemmas related to useful maps from $[0,2\pi]$ to $S$ are studied in Section \ref{prelemma}. The concepts of $\rho$-ellipses and $\rho$-polygons, already used in \cite{BY2}, are reintroduced in Section \ref{ellipsepolygon}, and some special results for $\rho\in M$ are proved in Section \ref{specialcase}. The main Theorem \ref{teo} is obtained in Section \ref{maintheo}, and an open problem is proposed in Section \ref{conclusion}.

\section{preliminary lemmas}\label{prelemma}


Along this section, some essential maps from $[0,2\pi]$ to $S$ and their properties are introduced. The first lemma and its proof appear in \cite{BY2}.

\begin{lem}\label{u} Let $0<\rho<1$. For any $u\in S$, there is a unique $u^*\in S$, $u\prec u^*$,
such that $[u,u^*]$ supports $\rho S$. The map $u\in
S\to u^*\in S$ is a homeomorphism, and $u\prec v$ implies that
$u^*\prec v^*$.
\end{lem}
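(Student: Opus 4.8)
The plan is to fix $u\in S$ and study the continuous function $d(v)=\inf_{t\in[0,1]}\|(1-t)u+tv\|$, the distance from the origin to the chord $[u,v]$. The key remark is that $[u,v]$ supports $\rho S$ exactly when $d(v)=\rho$: indeed $d(v)\ge\rho$ says that $[u,v]$ avoids the interior of $\rho B$, and, the infimum being attained by compactness, the equality $d(v)=\rho$ adds that $[u,v]$ meets $\rho S$. Since $d$ is continuous, $d(u)=\|u\|=1$, and $d(-u)=0$ (the chord $[u,-u]$ contains the origin), and since $0<\rho<1$, the intermediate value theorem applied to $d$ along the positively oriented arc of $S$ from $u$ to $-u$ produces a point $u^{*}$ with $d(u^{*})=\rho$; as $\rho\notin\{0,1\}$ this $u^{*}$ lies on the open arc, so $u\prec u^{*}\prec-u$, and $[u,u^{*}]$ supports $\rho S$.

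For uniqueness I would show that two distinct chords issuing from $u$ cannot both support $\rho S$. Suppose $[u,v]$ and $[u,w]$ do, and relabel so that $u\prec v\prec w\prec-u$. The restriction of $\|\cdot\|$ to the line $\overline{uw}$ is convex and equals $1$ at $u$ and at $w$, hence is $\ge1$ off $[u,w]$ and $\ge\rho$ on $[u,w]$; thus the whole line avoids the interior of $\rho B$ while touching $\rho S$, i.e.\ it is a support line of $\rho B$. Let $H$ be the closed half-plane it bounds that contains $\rho B$; the origin, of norm $0<\rho$, lies in $\operatorname{int}H$. Since $\overline{uw}$ touches $\rho S\subseteq\operatorname{int}B$ it is not a support line of $B$, so it cuts $B$ into two genuine caps. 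The segment $[0,-u]$ meets $\overline{uw}$ only if $w=\pm u$, hence it does not, so $-u$ lies on the same side of $\overline{uw}$ as the origin, i.e.\ in $\operatorname{int}H$. Therefore the origin, $-u$, and $\operatorname{int}(\rho B)$ all lie in the cap bounded by the long arc through $-u$, so the open arc from $u$ to $w$ through $v$ lies in $\operatorname{int}H'$, the complementary open half-plane; in particular $v\notin H$. Hence $[u,v]$ meets $H$ only at $u$, so $[u,v]\cap\rho S\subseteq\{u\}$, and since $\|u\|=1\ne\rho$ gives $u\notin\rho S$, we get $[u,v]\cap\rho S=\emptyset$, contradicting that $[u,v]$ supports $\rho S$. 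Thus $u^{*}$ is unique.

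The map $\varphi\colon u\mapsto u^{*}$ is then a bijection: running the same argument from a point $w$ but traversing $S$ in the negative sense shows that each $w\in S$ has a unique predecessor $w_{*}\prec w$ with $[w_{*},w]$ supporting $\rho S$, and $w\mapsto w_{*}$ is a two-sided inverse of $\varphi$. For continuity, let $u_{n}\to u$; $(u_{n}^{*})$ lies in the compact set $S$, and if a subsequence converges to $w$ then $u\preceq w$, and since $(a,b)\mapsto\inf_{t\in[0,1]}\|(1-t)a+tb\|$ is continuous, $[u,w]$ supports $\rho S$; as $w\ne u$ and $w\ne-u$ (those chords give $d=1$ and $d=0$ respectively), in fact $u\prec w$, so $w=u^{*}$ by uniqueness. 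Thus $u_{n}^{*}\to u^{*}$, $\varphi$ is continuous, and a continuous bijection from the compact space $S$ onto the Hausdorff space $S$ is a homeomorphism.

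It remains to prove $u\prec v\Rightarrow u^{*}\prec v^{*}$. First, $\varphi$ is \emph{odd}: the central symmetry $x\mapsto-x$ preserves $B$, $S$, and $\rho S$, so $[-u,-u^{*}]$ supports $\rho S$ and $-u\prec-u^{*}$, whence $(-u)^{*}=-u^{*}$. Second, $\varphi$ has no fixed point, since $u\prec u^{*}$ forces $u^{*}\ne u$. Since every orientation-reversing self-homeomorphism of a circle has a fixed point (by Lefschetz, or by an elementary lifting argument), the fixed-point-free homeomorphism $\varphi$ is orientation-preserving, hence preserves the cyclic order of $S$ (the one induced by the positive orientation, $S$ being a simple closed curve around the origin). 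Now $u\prec v$ means that $v$ lies on the positively oriented arc from $u$ to $-u$; applying $\varphi$ and using that it preserves the cyclic order and sends $-u$ to $-u^{*}$, we obtain that $v^{*}$ lies on the positively oriented arc from $u^{*}$ to $-u^{*}$, i.e.\ $u^{*}\prec v^{*}$. The step I expect to be most delicate is the uniqueness argument: since $S$ need not be strictly convex, the half-plane/cap dichotomy must be carried out with closed half-planes and the relevant boundary points located carefully, as above; once $\varphi$ is seen to be an odd fixed-point-free homeomorphism, the order-preservation is soft.
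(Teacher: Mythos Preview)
Your proposal is correct. The paper does not actually prove this lemma in the body of the text; it merely states that ``the first lemma and its proof appear in \cite{BY2}''. So there is no in-paper argument to compare against, and your write-up supplies a complete, self-contained proof where the paper defers to a reference.

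Two small remarks on presentation. In the uniqueness step, your assertion that ``the open arc from $u$ to $w$ through $v$ lies in $\operatorname{int}H'$'' is not literally true when $S$ has a flat edge along $\overline{uw}$; what \emph{is} true (and is all you need) is that this arc lies in the closed half-plane $H'$, and that $v$ itself lies in $\operatorname{int}H'$. The latter follows because three distinct collinear points $u,v,w$ of $S$ would force the norm to be identically $1$ on their convex hull, contradicting that the norm dips to $\rho$ on $[u,w]$; hence $v\notin\overline{uw}$. With $v\in\operatorname{int}H'$ and $u\in\partial H'$, the segment $(u,v]\subset\operatorname{int}H'$ misses $\rho S\subset H$, which is the contradiction you want. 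Your closing caveat already flags this as the delicate point, so you are aware of it; just tighten the wording.

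The order-preservation argument via ``odd, fixed-point-free $\Rightarrow$ orientation-preserving'' is a nice touch and entirely valid: an orientation-reversing self-homeomorphism of a circle always has a fixed point (lift to a strictly decreasing $F\colon\mathbb{R}\to\mathbb{R}$ with $F(x+1)=F(x)-1$ and apply the intermediate value theorem to $F(x)-x$), so your $\varphi$ must preserve cyclic order, and since $\varphi(-u)=-u^{*}$ the relation $u\prec v$ (i.e.\ $v$ on the positive open arc from $u$ to $-u$) is carried to $u^{*}\prec v^{*}$.
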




The next lemma summarizes a set of properties that are consequences of  (P-$\rho S$).

\begin{lem}\label{r} Let $0<\rho<1$. If $X$ fulfils \normalfont{(P-$\rho S$)}, then:
\begin{enumerate}
\item $X$ is regular (strictly convex and smooth).\label{r1}
\item For any $u\in S$, there is an unique $u^{\perp}\in S$, $u\prec u^{\perp}$, such that $u\perp u^{\perp}$ and the map $u\in S\to u^{\perp}\in S$ is a homeomorphism. If $v\in S$ and $u\prec v$, then $u^{\perp}\prec v^{\perp}$.\label{b}
\item \label{mu}For any $u\in S$, there exists an unique $\mu >0$,
such that  $\rho (u-\mu u^{\perp})$ and $\rho (u+\mu u^{\perp})$ belong to $S$.
\end{enumerate}
\end{lem}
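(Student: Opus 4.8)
The plan is to prove the three items in order, using item (\ref{r1}) in the proofs of (\ref{b}) and (\ref{mu}). The basic tool throughout is the reformulation of (P-$\rho S$) already noted in the excerpt: if a chord $[u,v]$ of $S$ supports $\rho S$, say touching it at $w$, then $\inf_{t\in[0,1]}\|(1-t)u+tv\|=\rho$ (the closest point of the chord to the origin is exactly the contact point $w\in\rho S$, which lies on the chord because $\|w\|=\rho<1$), so (P-$\rho S$) gives $\tfrac12u+\tfrac12v\in\rho S$; and if moreover $\rho S$ is strictly convex at $w$, then $w$ is the only point of the chord lying in $\rho B$, so the midpoint $\tfrac12u+\tfrac12v\in\rho B$ is forced to equal $w$. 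In short, under (P-$\rho S$) a supporting chord of $\rho S$ meets $\rho S$ precisely at its midpoint.

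For item (\ref{r1}) I would argue by contradiction, splitting into the two ways regularity can fail. If $X$ is not smooth, then $\rho B$ has a corner $c=\rho u_{0}$, and the supporting lines of $\rho B$ at $c$ form a pencil of positive angular width; extending each to a chord of $S$ gives, by Lemma \ref{u}, a one-parameter family $[u_{\theta},u_{\theta}^{*}]$ of chords all supporting $\rho S$ at the single point $c$. By the remark above, each has midpoint $c$, i.e. $u_{\theta}^{*}=2c-u_{\theta}$. As $\theta$ runs through the pencil, $u_{\theta}$ runs monotonically along $S$, so by Lemma \ref{u} so does $u_{\theta}^{*}$; but then $u_{\theta}\mapsto u_{\theta}^{*}=2c-u_{\theta}$ would be an order-preserving homeomorphism of an arc of $S$ onto an arc of $S$ realized by a point reflection through $c$, and a monotonicity/orientation analysis (this is the delicate bookkeeping) shows this is impossible when $c\ne 0$. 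If $X$ is smooth but not strictly convex, $S$ contains a maximal segment $[a,b]$ and $\rho S$ the segment $[\rho a,\rho b]$, lying on a line $L$ which, by smoothness, is the unique supporting line of $\rho B$ at $\rho a$ and at $\rho b$; by (P-$\rho S$) the chord $L\cap S=:[p,q]$ has midpoint in $L\cap\rho S=[\rho a,\rho b]$. Approaching $\rho a$ along $\rho S$ from the side opposite the segment through suitable points $\zeta_{n}\to\rho a$ (contact points of supporting chords of $\rho S$ meeting $\rho S$ only there), the corresponding chords of $S$ have midpoint $\zeta_{n}$ and converge to $[p,q]$, so $\tfrac12(p+q)=\lim_{n}\zeta_{n}=\rho a$; the same argument at $\rho b$ gives $\tfrac12(p+q)=\rho b$, whence $\rho a=\rho b$, a contradiction.

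For item (\ref{b}), $X$ is now regular. Since $u\perp v$ is equivalent to the line $u+\mathbb{R}v$ supporting $B$ at $u$ (because $\|u\|=1$), smoothness makes this line unique, so its direction singles out a unique $u^{\perp}\in S$ once we require $u\prec u^{\perp}$; continuity of the single-valued supporting-line map gives continuity of $u\mapsto u^{\perp}$, strict convexity makes ``direction $\mapsto$ the unique point of $S$ with that supporting direction'' well defined and continuous, furnishing the inverse, and the implication $u\prec v\Rightarrow u^{\perp}\prec v^{\perp}$ is the standard fact that the supporting direction of a convex curve turns monotonically with the contact point. For item (\ref{mu}), fix $u\in S$; by (\ref{b}), $u+\mathbb{R}u^{\perp}$ is the supporting line of $B$ at $u$, so $T:=\rho u+\mathbb{R}u^{\perp}$ is the supporting line of $\rho B$ at $\rho u$, meeting $\rho S$ only at $\rho u$ and $S$ in exactly two points $p,q$ (both by strict convexity), with $[p,q]\cap\rho B=\{\rho u\}$. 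Then $\inf_{t}\|(1-t)p+tq\|=\rho$, attained only at $\rho u$, so (P-$\rho S$) gives $\tfrac12p+\tfrac12q=\rho u$; hence $p,q$ are symmetric about $\rho u$ along $u^{\perp}$, say $p=\rho u+c\,u^{\perp}$ and $q=\rho u-c\,u^{\perp}$ with $c>0$. With $\mu:=c/\rho$ we get $\rho(u\pm\mu u^{\perp})\in\{p,q\}\subset S$, the asserted existence; and any $\mu'>0$ with $\rho(u\pm\mu'u^{\perp})\in S$ makes these two points the two points of $T\cap S=\{p,q\}$, forcing $\mu'=\mu$, which is uniqueness.

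The main obstacle is item (\ref{r1}): the reduction ``not regular $\Rightarrow$ there is a chord supporting $\rho S$ off its midpoint'' needs the case analysis above, and in particular the orientation argument in the corner case and the choice of the auxiliary contact points $\zeta_{n}$ in the flat-segment case require care; an argument of this type is essentially what underlies the regularity parts in \cite{BY1,BY2}. Once (\ref{r1}) is in place, items (\ref{b}) and (\ref{mu}) follow by the routine convexity arguments sketched above.
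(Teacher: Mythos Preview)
Your proposal is correct and follows essentially the same path as the paper's proof. The paper is in fact considerably briefer than you are: for items (\ref{r1}) and (\ref{b}) it simply observes that (P-$\rho S$) forces every supporting chord $[u,u^*]$ of $\rho S$ to touch $\rho S$ at its midpoint, and then invokes the proof of the corresponding regularity and homeomorphism statements for $\rho=\tfrac12$ in \cite{BY1} verbatim. Your corner/flat-segment case analysis is precisely the kind of argument underlying that citation, as you yourself note. For item (\ref{mu}) the paper's argument is identical to yours in substance: the convex function $\lambda\mapsto\|\rho u+\lambda\rho u^{\perp}\|$ takes the value $1$ at exactly two points $-\mu,\nu>0$, and then (P-$\rho S$) applied to the resulting supporting chord forces its midpoint to be $\rho u$, i.e.\ $\mu=\nu$.
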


\begin{proof}
Property (P-$\rho S$) implies that $[u,u^*]$ supports $\rho S$  at  $\tfrac12 u+\tfrac12 u^*$. Due to this fact and using Lemma \ref{u}, the proof of (1) and (2) presented in \cite{BY1} for the case $\rho=\tfrac12$ can be applied for every $0<\rho<1$.

Having in mind (2) and fixed $u\in S$, the convexity of the function $F:\lambda\in \mathbb{R}\to F(\lambda)=\|\rho u+\lambda \rho u^{\perp}\|$ implies that there exist only two real numbers $\mu,\nu\in \mathbb{R}_{+}$ such that $\|\rho u-\mu \rho u^{\perp}\|=\|\rho u+\nu \rho u^{\perp}\|=1$. Applying Lemma \ref{u}  to $\rho u-\mu \rho u^{\perp}$ and (P-$\rho S$), it is concluded that $\mu=\nu$.
\end{proof}




%
%
%


Since $S$ is a convex curve, the following \emph{natural parametrization} $s$ is continuous and of
bounded variation
$$\begin{array}{rccl}
s:& [0,2\pi] &  \longrightarrow& S\\
&\theta &\mapsto & s(\theta)=(s_1(\theta),s_2(\theta)),
\end{array}$$
where $(s_1(\theta),s_2(\theta))=\|(\cos\theta,\sin\theta)\|^{-1}(\cos\theta,\sin\theta).$

And as a consequence of Lemma \ref{u},  the following parametrization (non-natural, in
general) is also continuous and of
bounded variation
$$s^{*}:\theta\in[0,2\pi]\to s^{*}(\theta):=s(\theta)^*\in S.$$

Moreover, if $X$ fulfils (P-$\rho S$), the continuity and bounded variation hold for the parametrizations $s^\perp$, $\rho(s+\mu s^{\perp})$, and $\rho(s-\mu s^{\perp})$, and for the application $\mu s^{\perp}$ (by (\ref{b}) and (\ref{mu}) of Lemma \ref{r}) defined as follows:
$$
\begin{array}{rccl}
s^\perp:              & \theta\in[0,2\pi] &\to & s^\perp(\theta):=s(\theta)^{\perp}\in S,\\
\rho(s-\mu s^{\perp}):& \theta\in[0,2\pi] &\to & \rho(s(\theta)-\mu(\theta)
s^\perp(\theta))\in S,\\
\rho(s+\mu s^{\perp}):& \theta\in[0,2\pi] &\to &\rho( s(\theta)+\mu(\theta)
s^\perp(\theta))\in S, \\
\mu s^{\perp}:        & \theta\in[0,2\pi] &\to &\mu(\theta) s^{\perp}(\theta)\in X,
\end{array}$$
where $\mu(\theta)$ is the real number considered for $u=s(\theta)$ in (\ref{mu}) of Lemma \ref{r}.
Therefore all the Riemann-Stieltjes integrals that we shall write
from now on make sense. For example, if $t$ is any of the parametrizations of $S$ introduced above, and $u=t(\alpha)$ and $v=t(\beta)$ $(0\leq \alpha<\beta\leq 2\pi$), then
\begin{equation*}\label{area}
\mathcal{A}(B_u^v)=\frac12\int_\alpha^\beta t(\theta)\wedge dt(\theta)=\frac12\int_{\alpha}^{\beta}[t_1(\theta)dt_2(\theta)-t_2(\theta)dt_1(\theta)].\tag{A}
\end{equation*}


\begin{lem}\label{i0}
Let $0<\rho<1$ and $X$ fulfil \normalfont{(P-$\rho S$)}. Let $s:\theta\in[0,2\pi]\to s(\theta)\in S$ be a natural parametrization for
$S$, and $s^{\perp}(\theta)$ and $\mu s^{\perp}(\theta)$ as they are defined above. Then, for any $0\leq\alpha<\beta\leq 2\pi$:
\begin{enumerate}
	\item $\int_\alpha^\beta \mu(\theta)s^{\perp}(\theta)\wedge ds(\theta)=0.$\label{i01}
	\item $\int_\alpha^\beta s(\theta)\wedge d s^{\perp}(\theta)=s(\beta)\wedge s^{\perp}(\beta)-s(\alpha)\wedge s^{\perp}(\alpha).$\label{i02}
	\item $\int_\alpha^\beta s(\theta)\wedge d[\mu(\theta) s^{\perp}(\theta)]=s(\beta)\wedge \mu(\beta)s^{\perp}(\beta)-s(\alpha)\wedge \mu(\alpha)s^{\perp}(\alpha).$\label{i03}
	\end{enumerate}
\end{lem}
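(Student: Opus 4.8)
The plan is to reduce all three identities to a single geometric observation: for every $\theta$ the vector $s^{\perp}(\theta)$ is tangent to $S$ at $s(\theta)$, so that the Stieltjes increment $ds(\theta)$ is a scalar multiple of $s^{\perp}(\theta)$, and therefore the integrand $s^{\perp}(\theta)\wedge ds(\theta)$ vanishes identically.

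First I would record the tangency. Since $s(\theta)\perp s^{\perp}(\theta)$, i.e.\ $1=\|s(\theta)\|\le\|s(\theta)+\lambda s^{\perp}(\theta)\|$ for all $\lambda$, the line $\{s(\theta)+\lambda s^{\perp}(\theta):\lambda\in\mathbb{R}\}$ supports $B$ at $s(\theta)$; as $X$ is smooth (Lemma~\ref{r}\,(\ref{r1})) this supporting line is unique, hence it is the tangent line of $S$ at $s(\theta)$, and the tangent direction varies continuously with $\theta$ (for a smooth convex curve it is monotone with coinciding one-sided limits everywhere; also $\theta\mapsto s^{\perp}(\theta)$ is continuous by Lemma~\ref{r}\,(\ref{b})).

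Next, let $\ell(\theta)$ be the length, in the norm of $X$, of the arc $s([0,\theta])$; since $s$ is continuous and of bounded variation, $\ell$ is continuous and nondecreasing on $[0,2\pi]$, and reparametrizing $s$ by arc length together with the previous paragraph gives
\[
\int_{\alpha}^{\beta} g(\theta)\wedge ds(\theta)=\int_{\alpha}^{\beta}\bigl(g(\theta)\wedge s^{\perp}(\theta)\bigr)\,d\ell(\theta)\qquad\text{for every continuous }g\colon[\alpha,\beta]\to X.
\]
Applying this with $g=s^{\perp}$ and with $g=\mu s^{\perp}$ (here $\mu$ is continuous and bounded, being the $X$-norm of the continuous map $\mu s^{\perp}$), and using $s^{\perp}(\theta)\wedge s^{\perp}(\theta)=0$, yields $\int_{\alpha}^{\beta} s^{\perp}\wedge ds=0$ and, in particular, statement (\ref{i01}).

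Finally, statements (\ref{i02}) and (\ref{i03}) follow by Riemann--Stieltjes integration by parts applied to the bilinear map $\wedge$: since $s$, $s^{\perp}$ and $\mu s^{\perp}$ are continuous and of bounded variation,
\[
\int_{\alpha}^{\beta}s\wedge dw=\bigl[s\wedge w\bigr]_{\alpha}^{\beta}-\int_{\alpha}^{\beta}ds\wedge w=\bigl[s\wedge w\bigr]_{\alpha}^{\beta}+\int_{\alpha}^{\beta}w\wedge ds,
\]
and taking $w=s^{\perp}$ gives (\ref{i02}) while taking $w=\mu s^{\perp}$ gives (\ref{i03}), the remaining integrals vanishing by the previous step. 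The main obstacle is the displayed identity $ds=s^{\perp}d\ell$: because $s$ is only continuous and of bounded variation, the intuitive statement "$ds$ is parallel to $s^{\perp}$" must be turned into a genuine assertion about Stieltjes integrals, and this is precisely the place where smoothness of $X$ (uniqueness and continuity of the tangent direction) is used; once it is granted, (\ref{i01})--(\ref{i03}) are immediate.
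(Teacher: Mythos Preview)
Your treatment of (\ref{i02}) and (\ref{i03}) by integration by parts, reducing everything to the vanishing of $\int s^{\perp}\wedge ds$ and $\int \mu s^{\perp}\wedge ds$, is exactly what the paper does. The difference is in how you establish those vanishings. The paper does not pass through the arc-length identity $\int g\wedge ds=\int(g\wedge s^{\perp})\,d\ell$ at all; instead it argues directly on Riemann--Stieltjes sums: for any partition $\alpha=\theta_0<\cdots<\theta_n=\beta$, a mean-value argument on the convex arc gives intermediate points $\eta_k\in[\theta_{k-1},\theta_k]$ at which the tangent direction is parallel to the chord, i.e.\ $s^{\perp}(\eta_k)\wedge[s(\theta_k)-s(\theta_{k-1})]=0$, so the tagged sum $\sum_k \mu(\eta_k)s^{\perp}(\eta_k)\wedge[s(\theta_k)-s(\theta_{k-1})]$ is identically zero, and hence so is the integral.

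Both routes are correct, but they trade off differently. The paper's argument is shorter and entirely elementary: it needs only the existence of a parallel tangent on each subarc and the fact that the RS integral exists (so that one particular choice of tags suffices). Your route packages the same geometry into the reusable formula $ds=s^{\perp}\,d\ell$, which is conceptually pleasant and immediately gives both vanishings at once; but, as you yourself flag, turning that formula into a rigorous statement about Stieltjes integrals (via arc-length reparametrisation, a.e.\ differentiability of the unit-speed curve, and change of variables) is precisely the nontrivial step, and carrying it out carefully costs at least as much as the paper's direct argument. If you want the cleanest write-up, the mean-value-on-sums trick gets you (\ref{i01}) in two lines and avoids the obstacle entirely.
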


\begin{proof}
Let $\alpha=\theta_0<\theta_1<...<\theta_n=\beta$ be a partition of $[\alpha,\beta]$. By the Mean Value Theorem, there exist $\eta_1, \eta_2,...,\eta_n\in \mathbb{R}$ such that
$\theta_0\leq\eta_1\leq\theta_1\leq...\leq\theta_{n-1}\leq\eta_n\leq\theta_n$ and
$s^{\perp}(\eta_k)\wedge[s(\theta_k)-s(\theta_{k-1})]=0$. Thus, the Riemmann-Stieltjes sum related to this partition is equal to $0$, and (\ref{i01}) holds.

(\ref{i02}) and (\ref{i03}) result from the integration by parts of $\int_\alpha^\beta d[s(\theta)\wedge s^{\perp}(\theta)]$ and of $\int_\alpha^\beta d[s(\theta)\wedge \mu s^{\perp}(\theta)]$, respectively, and (\ref{i01}).
\end{proof}

\begin{lem}\label{T}
Let $0<\rho<1$. If  $X$ fulfils \normalfont{(P-$\rho S$)}, the function $u\in S \to  \mathcal{A}(T_u^{u^*})$ is constant.
\end{lem}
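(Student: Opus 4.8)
My plan is to replace the variable $u$ by the natural parametrization of the inscribed circle. For $\theta\in[0,2\pi]$ set $p(\theta)=\rho\bigl(s(\theta)-\mu(\theta)s^{\perp}(\theta)\bigr)$ and $q(\theta)=\rho\bigl(s(\theta)+\mu(\theta)s^{\perp}(\theta)\bigr)$, which are exactly the two parametrizations of $S$ whose continuity and bounded variation were recalled just before Lemma~\ref{i0}. Since $s(\theta)\perp s^{\perp}(\theta)$, the line through $\rho s(\theta)$ in the direction $s^{\perp}(\theta)$ supports $\rho S$ at $\rho s(\theta)=\tfrac12 p(\theta)+\tfrac12 q(\theta)$ and meets $S$ precisely at $p(\theta)$ and $q(\theta)$; moreover $p(\theta)\wedge q(\theta)=2\rho^{2}\mu(\theta)\bigl(s(\theta)\wedge s^{\perp}(\theta)\bigr)>0$ by Lemma~\ref{r}(2), so $p(\theta)\prec q(\theta)$, and hence $[p(\theta),q(\theta)]$ is the chord of $S$ supporting $\rho S$ with first endpoint $p(\theta)$; that is, $q(\theta)=p(\theta)^{*}$ by Lemma~\ref{u}. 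As $p$ is a parametrization of $S$, every $u\in S$ equals $p(\theta)$ for some $\theta$, so it suffices to show that $g(\theta):=\mathcal{A}\bigl(T_{p(\theta)}^{q(\theta)}\bigr)$ is constant; since $[0,2\pi]$ is connected, it is enough to show $g$ is locally constant around an arbitrary $\theta_{0}$.

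Fix $\theta_{0}$. Because $\rho<1$ the chord is nondegenerate, $p(\theta_{0})\ne q(\theta_{0})$, so by continuity for $\theta$ slightly larger than $\theta_{0}$ the four points occur counterclockwise in the order $p(\theta_{0})\prec p(\theta)\prec q(\theta_{0})\prec q(\theta)$ (the case $\theta<\theta_{0}$ being symmetric). Additivity of circular sectors about the origin then gives $\mathcal{A}(B_{p(\theta)}^{q(\theta)})-\mathcal{A}(B_{p(\theta_{0})}^{q(\theta_{0})})=\mathcal{A}(B_{q(\theta_{0})}^{q(\theta)})-\mathcal{A}(B_{p(\theta_{0})}^{p(\theta)})$. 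Substituting this into $g(\theta)=\mathcal{A}(B_{p(\theta)}^{q(\theta)})-\tfrac12 p(\theta)\wedge q(\theta)$, rewriting the two $B$-sectors on the right by the area formula (A) applied with the parametrizations $q$ and $p$, and handling $\tfrac12\bigl(p(\theta)\wedge q(\theta)-p(\theta_{0})\wedge q(\theta_{0})\bigr)=\tfrac12\int_{\theta_{0}}^{\theta}\bigl(dp\wedge q+p\wedge dq\bigr)$ by integration by parts for curves of bounded variation, I reach
$$g(\theta)-g(\theta_{0})=\frac12\int_{\theta_{0}}^{\theta}\bigl(q\wedge dq-p\wedge dp-dp\wedge q-p\wedge dq\bigr).$$
Now write $a=\rho s$ and $b=\rho\,\mu s^{\perp}$, so $p=a-b$, $q=a+b$; expanding and using only the antisymmetry of $\wedge$ (in particular $a\wedge da=-\,da\wedge a$, $b\wedge db=-\,db\wedge b$) the integrand collapses, as a Riemann--Stieltjes integrand, to $4\,b\wedge da=4\rho^{2}\,\mu(\theta)s^{\perp}(\theta)\wedge ds(\theta)$. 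Hence $g(\theta)-g(\theta_{0})=2\rho^{2}\int_{\theta_{0}}^{\theta}\mu(\theta)s^{\perp}(\theta)\wedge ds(\theta)=0$ by Lemma~\ref{i0}(1), so $g$ is locally constant, therefore constant, which is the assertion.

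The algebraic collapse of the integrand is mechanical once the substitution $p=a-b$, $q=a+b$ is made; what genuinely needs care is the orientation bookkeeping: the identification $q(\theta)=p(\theta)^{*}$ (resting on strict convexity and smoothness, Lemma~\ref{r}(1), so that Birkhoff orthogonality pins down the supporting line and $\rho<1$ keeps the chord honest) and the cyclic order of $p(\theta_{0}),p(\theta),q(\theta_{0}),q(\theta)$, together with the legitimacy of sector additivity and of the integration by parts for continuous curves of bounded variation. I expect the ordering step to be the only real obstacle; everything after it is forced once the parametrization $p$ of the chords is in place.
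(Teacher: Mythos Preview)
Your proof is correct and follows essentially the same route as the paper: both introduce the parametrizations $p(\theta)=\rho(s-\mu s^{\perp})$ and $q(\theta)=\rho(s+\mu s^{\perp})$, express the relevant sector areas via formula~(A), and reduce the difference $\mathcal{A}(T_{u}^{u^{*}})-\mathcal{A}(T_{v}^{v^{*}})$ to the vanishing integral $\int \mu s^{\perp}\wedge ds$ of Lemma~\ref{i0}(\ref{i01}). The only differences are organizational---you perform the sector additivity first and frame the result as local constancy, whereas the paper compares two arbitrary points directly and invokes additivity at the end---and in the bookkeeping of the algebraic collapse, where the paper cites Lemma~\ref{i0}(\ref{i03}) explicitly while you absorb that step into your integration by parts.
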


\begin{proof}
 Let $u,v\in S$, $u\prec v$, and
$s:\theta\in [0, 2\pi]\to s(\theta)\in S$ be a parametrization of $S$. By (3) of Lemma \ref{r}, there exist $0\leq \alpha<\beta\leq 2\pi$ such that
$$
u=\rho[s(\alpha)-\mu(\alpha)s^\perp(\alpha)],
\quad u^*=\rho[s(\alpha)+\mu(\alpha)s^\perp(\alpha)]
$$
$$
v=\rho[s(\beta)-\mu(\beta)s^{\perp}(\beta)],
\quad v^*=\rho[s(\beta)+\mu(\beta)s^\perp(\beta)].
$$
By (\ref{area}),
$$\mathcal{A}(B_u^v)=\frac{\rho ^2}2\int_{\alpha}^{\beta} [s(\theta)-\mu(\theta)s^{\perp}(\theta)]\wedge
d [s(\theta)-\mu(\theta)s^{\perp}(\theta)],$$
$$\mathcal{A}(B_{u^*}^{v^*})=\frac{\rho ^2}2\int_{\alpha}^{\beta} [s(\theta)+\mu(\theta)s^{\perp}(\theta)]\wedge
d [s(\theta)+\mu(\theta)s^{\perp}(\theta))].$$

 \noindent Therefore, (\ref{i01}) and (\ref{i03}) of Lemma \ref{i0} imply that
 \begin{multline*}\mathcal{A}(B_{u^*}^{v^*})-\mathcal{A}(B_u^v)=\\
 =\rho ^2 \int_{\alpha}^{\beta} s(\theta)\wedge d[\mu(\theta)s^{\perp}(\theta)]+ \rho ^2\int_{\alpha}^{\beta}\mu(\theta)s^{\perp}(\theta)\wedge d s(\theta)=\\
=\rho ^2[s(\beta)\wedge \mu(\beta)s^{\perp}(\beta)-s(\alpha)\wedge \mu(\alpha)s^{\perp}(\alpha)]= \\
=\frac12(v\wedge v^*)-\frac12(u\wedge u^*).
\end{multline*}

If $v\prec u^*$, it is easy to check that  $\mathcal{A}(B_u^{u^*})=\mathcal{A}(B_u^v)+ \mathcal{A}(B_{v}^{u^*})$ and $\mathcal{A}(B_v^{v^*})=\mathcal{A}(B_v^{u^*})+ \mathcal{A}(B_{u^*}^{v^*})$. Similarly, $\mathcal{A}(B_u^v)=\mathcal{A}(B_u^{u^*})+ \mathcal{A}(B_{u^*}^v)$ and $\mathcal{A}(B_{u^*}^{v^*})=\mathcal{A}(B_{u^*}^v)+ \mathcal{A}(B_v^{v^*})$  if $u^* \prec v$.
In both cases, it is verified that
$$
\mathcal{A}(B_{u^*}^{v^*})-\mathcal{A}(B_u^v)=\frac12(v\wedge v^*)-\frac12(u\wedge u^*)+\mathcal{A}(T_v^{v^*})-\mathcal{A}(T_u^{u^*}),
$$
and it is concluded that $\mathcal{A}(T_v^{v^*})-\mathcal{A}(T_u^{u^*})=0$.


\end{proof}


\section{$\rho$-ellipses and $\rho$-polygons} \label{ellipsepolygon}
 Fixed $0<\rho<1$ and $u\in
S$, the {\bf $\rho-$polygon} associated to $u$ is the  set of ordered points $P_u=\{u_1, u_2, u_3,...\}$ of $S$ and the segments $[u_i,u_{i+1}]$, such that $u=u_1\prec u_2\prec u_3,...$, and every  segment $[u_i,u_{i+1}]$ supports $\rho S$, i.e., $u_{i+1}=u_i^*$. Each $u_i\in P_u$ is a \emph{vertex} of $P_u$ and each $[u_i,u_{i+1}]$ is a \emph{side} of $P_u$. The {\bf $\rho$-ellipse} $C_u$ associated to $u\in S$ is the unique ellipse centered at $(0,0)$ that contains the points $u$, $\frac1{2\rho}(u+u^*)$, and $u^*$ (see Figure \ref{dibujoortgonal}).

 Some properties and examples of $\rho-$polygons and $\rho$-ellipses are presented in \cite{BY2}. For instance, if $S$ is an ellipse and $\rho=\sqrt{(1+\cos\frac{2k\pi}{n})/2}$ with $\frac{k}{n}$ irreducible, it is proved that $P_u$ is convex with $n$ vertices for $k=1$ and $n=3,4,...$; $P_u$ is star-shaped with $n$ vertices for either $k=2,...,\frac{n}2-1$ when $n$ is even, or for $k=2,...,\frac{n-1}2$ when $n$ is odd; $P_u$ is dense if $\rho$ is not in any of the previous cases. On the other hand, if $S$ is the unit sphere of $\ell_\infty^2$ (with vertices $\{(\pm1,\pm1)\}$) and $\rho=\frac12$, then $P_{(1,0)}=\{(0,\pm 1),(\pm1,0)\}$; and if $u\notin \{(0,\pm1), (\pm1,0)\}$, then $P_u$ has infinite vertices, but it is not dense in $S$ (the points $\{(0,\pm1), (\pm1,0)\}$ are the unique points of accumulation of $P_u$). And the same happens for $l_p^2$ ($p>2$) and $\rho=(\tfrac12)^{\frac{p-1}{p}}$.


%
%

%

%
%


The next lemma presents some properties of $\rho$-polygons.

\begin{lem}\label{pol1} Let $0<\rho<1$. Then:
\begin{enumerate}
\item If there exists  $u\in S$ such that $P_u$ is dense in $S$, then $P_v$ is dense in $S$ for any $v\in S$.

\item If there exist  $u,v\in S$ such that $P_u$ and $P_v$ have a finite number of points, then both  have the same number of vertices, and either both are convex or both are star-shaped. Besides, in this last case the number of vertices that are (geometrically situated on $S$) between $u_i$ and $u_{i+1}$ is equal to the number of vertices between $v_i$ and $v_{i+1}$.

\item If $P_u=\{u_1, u_2, u_3,...\}$, then $-P_u=\{-u_1, -u_2,-u_3,...\}=P_{-u}$.

\item If $P_u$ has an odd number of vertices, 
then $P_u \cap P_{-u}=\emptyset$.
\end{enumerate}
\end{lem}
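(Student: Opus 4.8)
The plan is to translate the whole statement into the dynamics of the map $\sigma\colon S\to S$, $\sigma(u)=u^{*}$. By Lemma~\ref{u}, $\sigma$ is an orientation-preserving homeomorphism of the topological circle $S$; the $\rho$-polygon $P_{u}$ is exactly the forward orbit $\{\sigma^{n}(u):n\ge 0\}$, and $P_{u}$ is finite precisely when $u$ is $\sigma$-periodic, the number of vertices being the minimal period of $u$. With this dictionary, (1) and (2) become facts about orbits of a circle homeomorphism, while (3) and (4) rest on the fact that $\sigma$ commutes with the antipodal map.

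For (1) I would show that a dense orbit forces $\sigma$ to be minimal; then every $P_{v}$ is dense because the only nonempty closed forward-invariant subset of $S$ is $S$ itself. If $\sigma$ were not minimal it would contain a proper minimal set $K$, which is necessarily a finite periodic orbit or a Cantor set (an isolated point of $K$ would force $K$ finite; nonempty interior would make $\partial K$ a strictly smaller nonempty closed invariant set). In the periodic case, for $u\notin K$ the forward orbit is eventually monotone inside each of the finitely many arcs of $S\setminus K$, hence accumulates only on a finite set, so $\overline{P_{u}}\ne S$; and $u\in K$ gives $\overline{P_{u}}\subseteq K\ne S$. In the Cantor case the gaps of $K$ are wandering (a periodic gap would have periodic endpoints), so for $u$ lying in a gap each gap of the forward orbit contains exactly one orbit point, in its interior, and the orbit cannot accumulate at any other interior point of a gap; a compact subarc of such a gap avoiding that one point is then disjoint from $\overline{P_{u}}$, and again $u\in K$ gives $\overline{P_{u}}\subseteq K\ne S$. (Alternatively one may simply quote the Poincar\'e classification: a circle homeomorphism with a dense orbit is topologically conjugate to an irrational rotation.)

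For (2), finiteness of $P_{u}$ and $P_{v}$ means $u,v$ are periodic, and I would use that all periodic points of an orientation-preserving circle homeomorphism have the same minimal period $n$ and the same combinatorial type: on its $n$ points $\sigma$ acts as a shift by a fixed $k$ with $\gcd(k,n)=1$. The common period can be seen directly: the $n$ points of $P_{u}$ split $S$ into $n$ arcs that $\sigma$ permutes cyclically; if $v$ is periodic of period $m$ and lies in the interior of one such arc $A$, then $\sigma^{m}$ fixes $A$, forcing $n\mid m$, and symmetrically $m\mid n$, so $m=n$ (if $v$ is an endpoint the claim is immediate), and the common $k$ is then the intrinsic rotation-number numerator. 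Hence $P_{u}$ and $P_{v}$ have the same number of vertices $n$; the polygon is convex exactly when $k=1$ (each side joins cyclically consecutive vertices) and is a star polygon when $k\ge 2$, so both are convex or both star-shaped; and in the star case exactly $k-1$ vertices lie on $S$ between $u_{i}$ and $u_{i+1}=u_{i}^{*}$, the same count for $P_{v}$.

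For (3), the antipodal map $x\mapsto -x$ is orientation-preserving on the plane $X$ (its determinant is $+1$) and preserves $S$ and $\rho S$, so it carries the chord $[u,u^{*}]$ supporting $\rho S$ to the chord $[-u,-u^{*}]$ supporting $\rho S$, with $-u\prec -u^{*}$ (since $(-u)\wedge(-u^{*})=u\wedge u^{*}>0$); uniqueness in Lemma~\ref{u} gives $(-u)^{*}=-(u^{*})$, i.e.\ $\sigma$ commutes with $x\mapsto -x$, and iterating yields $P_{-u}=-P_{u}$. For (4), suppose $P_{u}$ has odd period $n$ and some $w\in P_{u}\cap P_{-u}$; by (3), $w=\sigma^{i}(u)=-\sigma^{j}(u)=\sigma^{j}(-u)$ for some $0\le i,j<n$, so $\sigma^{i-j}(u)=-u$, and applying $\sigma^{i-j}$ once more (using $\sigma(-x)=-\sigma(x)$) gives $\sigma^{2(i-j)}(u)=u$; thus $n\mid 2(i-j)$, and since $n$ is odd, $n\mid (i-j)$, whence $-u=\sigma^{i-j}(u)=u$, which is impossible as $0\notin S$. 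Therefore $P_{u}\cap P_{-u}=\emptyset$. The substantive obstacles are the two dynamical inputs used above — that a dense orbit is minimal (part (1)) and that all periodic orbits share period and combinatorial type (part (2)); once these are in hand, reading off the polygon combinatorics and proving (3)--(4) is routine.
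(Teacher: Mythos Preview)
Your argument is correct. Framing the problem as the dynamics of the orientation-preserving circle homeomorphism $\sigma(u)=u^{*}$ is a genuinely different route from the paper's, and it pays off in uniformity: once you invoke (or reprove) the Poincar\'e trichotomy and the fact that all periodic orbits of such a map share the same period and the same cyclic shift $k$ with $\gcd(k,n)=1$, parts (1) and (2) fall out simultaneously, and the convex/star distinction together with the ``$k-1$ vertices in between'' count is literally the rotation-number numerator. The paper, by contrast, cites \cite{BY2} for (1) and gives a hands-on argument for (2): it traps $v_1,v_{n+1},v_{2n+1},\dots$ in a single arc of $S\setminus P_u$ and uses the order-preserving property of $\sigma$ to get a monotone sequence that cannot close up unless $m=n$; the convex/star and vertex-count statements are then read off from the interleaving $u_1\prec v_1\prec u_p$, $u_2\prec v_2\prec u_{p+1},\dots$ For (3) your proof and the paper's coincide. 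For (4) your algebraic argument ($\sigma^{2r}(u)=u\Rightarrow n\mid 2r\Rightarrow n\mid r$ since $n$ is odd) is a clean alternative to the paper's observation that a common vertex would force $P_u=-P_u$, impossible for an odd set. Your approach imports more machinery (minimal sets, rotation number) but yields a conceptually unified proof; the paper's is more elementary and self-contained, relying only on the monotonicity of $u\mapsto u^{*}$ from Lemma~\ref{u}.
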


\begin{proof}
The statement (1) is proved in Lemma $2.2$ of \cite{BY2}.

In order to prove (2), let us consider $P_u=\{u_1, u_2,...,u_n \}$ and $P_v=\{v_1,v_2,...,v_m\}$ with $m>n$. Let us assume without loss of generality that $u_1\prec v_1\prec u_p$, and there is not any other vertex of $P_u$ between $u_1$ and $u_p$ ($p<n$). By Lemma \ref{u},
$$u_2\prec v_2\prec u_{p+1},\quad \ldots ,\quad u_n\prec v_n\prec u_{p+n-1},\quad u_1\prec v_{n+1}\prec u_p,\quad\ldots ,$$ and it is concluded that $m=kn$, $k\in \mathbb{N}$.
Thus, $u_1\prec\ v_1 \prec u_p$ and $u_1\prec v_{n+1}\prec u_p$.
But if $v_1\prec v_{n+1}$, then  $v_1\prec v_{kn+1}$ ($\forall k\in \mathbb{N}$) by Lemma \ref{u}, and this is a contradiction because $v_1=v_{m+1}=v_{kn+1}$. The analysis is similar if $v_{n+1}\prec v_1$.

Let us assume now that  $P_u$ is convex, $P_v$ is star-shaped, and each set has $n$ vertices. Then,
$u_1\prec v_1\prec u_2\prec v_2 \prec u_3\prec ...\prec u_n\prec v_n \prec u_1,$ leads to a contradiction.

For the last assertion of (2), it is enough to consider that  $u_i\prec u_m \prec u_{i+1}$ implies (Lemma \ref{u})  $u_{i+1}\prec u_{m+1} \prec u_{i+2}$.

The equality $-P_u=P_{-u}$ of (3) is a consequence of the symmetry of $S$.

Let us see (4). Let us assume that $P_u$ has an odd number of vertices and that there exist $-u_p\in -P_u$ and $u_q\in P_u$ such that $-u_p=u_q$. Then, $(-u_p)^*=-u_{p+1}$ (by symmetry of $S$) and $(-u_p)^*=u_{q+1}$ (by the construction of $P_u$). I.e.,
$-u_{p+1}=u_{q+1}$, and, in general, $-u_{p+k}=u_{q+k}$, $(k\in\mathbb{N}).$ Consequently, $P_u$ is symmetric, and this is not possible because it has an odd number of vertices.
\end{proof}


\begin{lem}\label{ig}
Let $u,v\in S$ such that $P_u=\{u_1, u_2,...,u_n \}$ and $P_v=\{v_1,v_2,...,v_n\}$ for some $0<\rho<1$.  If $X$ fulfils \normalfont{(P-$\rho S$)}, then
$$u_1\wedge u_2+...+u_{n-1}\wedge u_n+u_n\wedge u_1\, =\, v_1\wedge v_2+...+v_{n-1}\wedge v_n+v_n\wedge v_1.$$
\end{lem}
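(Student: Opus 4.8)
The plan is to read the cyclic sum $u_1\wedge u_2+\ldots+u_n\wedge u_1$ (indices modulo $n$, so $u_{n+1}=u_1$) as twice the signed area enclosed by the $\rho$-polygon $P_u$, and to evaluate that area with the help of Lemmas \ref{T} and \ref{pol1}. Since every side satisfies $u_{i+1}=u_i^*$, the decomposition $\mathcal{A}(B_u^v)=\tfrac12 (u\wedge v) + \mathcal{A}(T_u^v)$ recalled in the Introduction gives, for each $i$,
\[
\tfrac12\,u_i\wedge u_{i+1}=\mathcal{A}(B_{u_i}^{u_i^*})-\mathcal{A}(T_{u_i}^{u_i^*}).
\]
By Lemma \ref{T} the value $\mathcal{A}(T_{u_i}^{u_i^*})$ is a constant $c$ depending only on $X$ and $\rho$, the same for every vertex of every $\rho$-polygon. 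Summing over $i=1,\ldots,n$,
\[
\tfrac12\bigl(u_1\wedge u_2+\ldots+u_n\wedge u_1\bigr)=\sum_{i=1}^{n}\mathcal{A}(B_{u_i}^{u_i^*})-nc ,
\]
and the identical formula holds for $P_v$ with the same $c$. So it suffices to prove that $\sum_{i=1}^{n}\mathcal{A}(B_{u_i}^{u_i^*})=\sum_{j=1}^{n}\mathcal{A}(B_{v_j}^{v_j^*})$.

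For this I would use the area formula (\ref{area}) with the natural parametrization $s$, extended $2\pi$-periodically. Writing $u_i=s(\phi_i)$ and letting $\delta_i\in(0,\pi)$ be the counterclockwise angular increment carrying $u_i$ to $u_i^*$ (it lies in $(0,\pi)$ because $u_i\prec u_i^*$), one has $\mathcal{A}(B_{u_i}^{u_i^*})=\tfrac12\int_{\phi_i}^{\phi_i+\delta_i}s\wedge ds$. Because the side-sequence closes up, $\sum_{i=1}^{n}\delta_i$ is a positive integer multiple of $2\pi$, say $2\pi w_u$, where $w_u\ge 1$ is the winding number of the closed polygon about $0$. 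Since $s\wedge ds$ is $2\pi$-periodic and the successive intervals $[\phi_i,\phi_i+\delta_i]$ are consecutive, the sum telescopes:
\[
\sum_{i=1}^{n}\mathcal{A}(B_{u_i}^{u_i^*})=\tfrac12\int_{\phi_1}^{\phi_1+2\pi w_u}s\wedge ds=\tfrac{w_u}{2}\int_{0}^{2\pi}s\wedge ds=w_u\,\mathcal{A}(B).
\]

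It remains to check that $w_u=w_v$. By Lemma \ref{pol1}(2), $P_u$ and $P_v$ have the same number $n$ of vertices, are of the same kind (both convex or both star-shaped), and — in the star-shaped case — the number of vertices of $P_u$ lying on $S$ strictly between $u_i$ and $u_i^*$ equals the corresponding number for $P_v$; this common number is $w-1$ (and is independent of $i$, as the argument of Lemma \ref{pol1}(2) via Lemma \ref{u} shows), while in the convex case $w_u=w_v=1$. Hence $w_u=w_v$, and combining the displays above,
\[
\tfrac12\bigl(u_1\wedge u_2+\ldots+u_n\wedge u_1\bigr)=w_u\mathcal{A}(B)-nc=w_v\mathcal{A}(B)-nc=\tfrac12\bigl(v_1\wedge v_2+\ldots+v_n\wedge v_1\bigr),
\]
which is the assertion.

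The step I expect to require the most care is the middle one: verifying that traversing $P_u$ sweeps exactly $w_u$ complete revolutions and that the sectors $B_{u_i}^{u_i^*}$ fit together without gaps or overlaps beyond the prescribed multiplicity, so that their areas really add up to $w_u\,\mathcal{A}(B)$. Expressing everything through the natural parametrization and the $2\pi$-periodicity of $s\wedge ds$, as above, is what turns this geometrically evident fact into a rigorous computation; the remaining ingredients are direct invocations of Lemma \ref{T} (constancy of $\mathcal{A}(T_u^{u^*})$) and Lemma \ref{pol1} (equality of combinatorial type).
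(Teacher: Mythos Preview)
Your argument is correct and follows essentially the same route as the paper: decompose each wedge term via $\tfrac12\,u_i\wedge u_{i+1}=\mathcal{A}(B_{u_i}^{u_{i+1}})-\mathcal{A}(T_{u_i}^{u_{i+1}})$, invoke Lemma~\ref{T} for the constancy of the $T$-terms, and use Lemma~\ref{pol1}(2) to see that the sector sums for $P_u$ and $P_v$ both equal the same multiple of $\mathcal{A}(B)$. The paper simply writes this multiple as $(r+1)\mathcal{A}(B)$, with $r$ the common number of vertices lying between consecutive ones; your winding number $w_u$ is exactly $r+1$, and your integral computation via the $2\pi$-periodic parametrization is a more explicit justification of the identity the paper states directly.
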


\begin{proof}
Let  $r$ be the constant number of vertices of $P_u$ that are (geometrically situated) between $u_i$ and $u_{i+1}$ (or, equivalently, between $v_i$ and $v_{i+1}$ by (2) of Lemma \ref{pol1}).
The statement is a consequence of Lemma \ref{T} and these equalities
\begin{multline*}
(r+1)\mathcal{A}(B)=\mathcal{A}(B_{u_1}^{u_2})+...+\mathcal{A}(B_{u_n}^{u_1})=\\
\frac12 (u_1\wedge u_2)+\mathcal{A}(T_{u_1}^{u_2})+...+\frac12 (u_n\wedge u_1)+\mathcal{A}(T_{u_n}^{u_1}),
\end{multline*}
\begin{multline*}
(r+1)\mathcal{A}(B)
=\mathcal{A}(B_{v_1}^{v_2})+...+\mathcal{A}(B_{v_n}^{v_1})=\\
\frac12 (v_1\wedge v_2)+\mathcal{A}(T_{v_1}^{v_2})+...+\frac12 (v_n\wedge v_1)+\mathcal{A}(T_{v_n}^{v_1}).
\end{multline*}
\end{proof}




The following result 
presents some properties about $\rho$-ellipses and spheres that are \textit{tangent}. It is said that $C_u$  and $S$ are \textit{tangent} at $v\in S\cap C_u$ if both curves have the same supporting line at $v$. If $X$ fulfils (P-$\rho S$), then $C_u$ and $S$ are tangent at $u$ if and only if the common supporting line at $u$ is
$$
\{u+\lambda [(1-2\rho^2)u+u^*]:\; \lambda \in \mathbb{R}\},
$$ that is, if and only if $u$ has the following property (see Figure \ref{dibujoortgonal})
\begin{equation}\label{support}
u\;\perp\;(1-2\rho^2)u+u^*. \tag{$*$}
\end{equation}
Likewise, $C_u$ and $S$ are tangent at $u^*\in S\cap C_u$ if and only if $u^*$ verifies
\begin{equation}\label{support2}
u^*\;\perp \; -u - (1-2\rho^2)u^*. \tag{$**$}
\end{equation}

\begin{figure}[ht]
	\begin{center}
        \scalebox{0.8}{\includegraphics[viewport=100 515 530 700,clip]{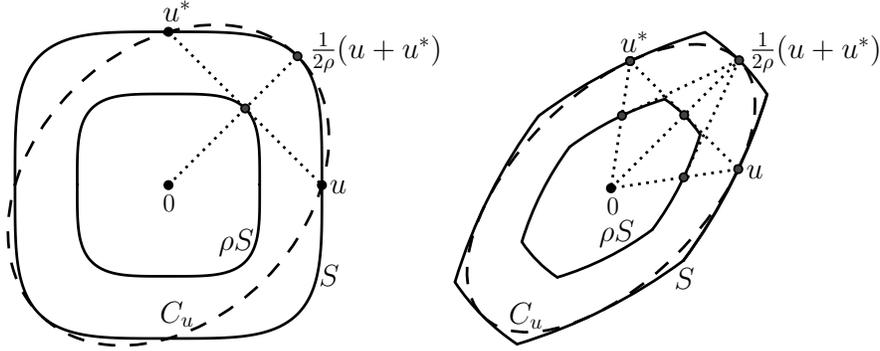}}\\
		\caption{$\rho$-ellipses $C_u$ associated to $u\in S$. On the right, $S$ and $C_u$ are tangent at $u$, $\frac{u+u^*}{2\rho}$, and $u^*.$}\label{dibujoortgonal}
	\end{center}
\end{figure}

\begin{lem}\label{elip}Let $0<\rho<1$. If $X$ fulfils  \normalfont{(P-$\rho S$)}, then:
\begin{enumerate}
 \item $C_u$ and $S$ are tangent at $\frac1{2\rho} (u+u^*)$ for every $u\in S$.


\item If $C_u$ and $S$ are tangent at $u\in S$ (equivalently, if $u$ verifies (\ref{support})), then $C_{u}$ and $S$ are tangent at every point of $P_u\cup P_w$, where $w=\frac1{2\rho} (u+u^*)$.

\item There exists $v\in S$ such that $C_v$ and $S$ are tangent at $v$.
\end{enumerate}
\end{lem}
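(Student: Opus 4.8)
The three statements build on one another, so I would treat them in order, and the real content lies in (1) and (3).

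For (1), fix $u\in S$ and write $w=\frac1{2\rho}(u+u^*)$. The $\rho$-ellipse $C_u$ is, by definition, the unique centered ellipse through $u$, $w$, and $u^*$. The key geometric observation is that $u$ and $u^*$ are symmetric with respect to the line through $0$ and $w$ (they are the two endpoints of a chord of $S$ whose midpoint $\frac12(u+u^*)=\rho w$ lies on the ray $0w$, and by (P-$\rho S$) that chord is Birkhoff-orthogonal to $u^*-u$, i.e. to the direction conjugate to $w$). Hence the diameter of $C_u$ in the direction of $w$ and the diameter in the direction of $u^*-u$ are a pair of conjugate semidiameters of $C_u$; the tangent to $C_u$ at the endpoint $w/\|w\|_{C_u}$ of the first diameter is therefore parallel to $u^*-u$. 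On the other hand $C_u$ passes through $w$, so $w$ is exactly that endpoint, and the tangent to $C_u$ at $w$ is the line through $w$ in direction $u^*-u$. But $[u,u^*]$ supports $\rho S$ at $\rho w=\frac12(u+u^*)$, and by smoothness (Lemma \ref{r}(1)) the unique supporting line of $S$ at $w=\frac1{2\rho}(u+u^*)$ — equivalently of $\rho S$ at $\rho w$, rescaled — is precisely the line through $w$ in direction $u^*-u$. So $C_u$ and $S$ share a supporting line at $w$, which is the definition of tangency. The computation identifying this common tangent with $\{u+\lambda[(1-2\rho^2)u+u^*]\}$ is a routine check: that vector is a linear combination of $u^*-u$ and $w$, and one verifies it is in fact parallel to $u^*-u$ after noting $(1-2\rho^2)u+u^* = (u^*-u)+2(1-\rho^2)u$ and that $w=\frac1{2\rho}(u+u^*)$, so expressing everything in the $\{u,u^*\}$ basis pins down the direction.

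For (2), suppose $u$ satisfies (\ref{support}), so $C_u$ and $S$ are tangent at $u$. By part (1) they are also tangent at $w=\frac1{2\rho}(u+u^*)$. Now I would argue that tangency at $u$ forces tangency at $u^*$: the map $u\mapsto u^*$ and the ellipse $C_u$ interact symmetrically — more concretely, the affine symmetry of the plane that fixes $C_u$ and swaps the two diameter-endpoints $u$ and (the $C_u$-normalization of) $u^*$ along the chord direction also maps $S$ to a convex curve tangent to $C_u$ at $u^*$; but by (P-$\rho S$) that curve must be $S$ itself, because $S$ is determined near $u^*$ by the supporting-line data, which is symmetric. (Alternatively: check directly that (\ref{support}) for $u$ is equivalent to (\ref{support2}) for $u^*$, using $u\perp v \Leftrightarrow$ the line $u+\lambda v$ supports $S$ at $u$, and that both conditions say the same chord-and-ellipse configuration is tangent.) Once $C_u=C_{u^*}$ is tangent to $S$ at $u^*$, and $C_{u^*}$ is the $\rho$-ellipse of $u^*$, we may iterate: tangency at $u_i$ gives tangency at $u_{i+1}=u_i^*$ and at $\frac1{2\rho}(u_i+u_i^*)$, i.e. at every vertex of $P_u$ and, applying part (1) at each vertex, at the corresponding midpoints, which are exactly the vertices of $P_w$. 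A small point to address is that the $\rho$-ellipse is the same ellipse for all these base points once tangency holds — this follows because $C_{u_i}$ and $C_{u_{i+1}}$ are both centered ellipses tangent to $S$ at the shared point $u_{i+1}$ with the shared tangent line and both pass through $u_{i+1}$'s partners, and an ellipse tangent to a given line at a given point and passing through one further point on the relevant diameter is unique.

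For (3) — the existence of some $v$ with $C_v$ tangent to $S$ at $v$ — I would use a continuity/fixed-point argument on the natural parametrization. Define, for $\theta\in[0,2\pi]$, a continuous real function $g(\theta)$ measuring the (signed) angular discrepancy between the supporting direction of $S$ at $s(\theta)$ and the direction $(1-2\rho^2)s(\theta)+s^*(\theta)$ which would make (\ref{support}) hold; concretely $g(\theta)$ can be taken as $s(\theta)\wedge\big[(1-2\rho^2)s(\theta)+s^*(\theta)\big]$ minus an appropriate normalization, or better, as the difference between the angle of $s^\perp(\theta)$ and the angle of the normalized $(1-2\rho^2)s(\theta)+s^*(\theta)$. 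Continuity comes from Lemma \ref{u} (continuity of $s^*$) and Lemma \ref{r}(1)–(2). The crucial step is to show $g$ changes sign, which I would get from an averaging identity: summing or integrating the tangency-defect around $S$ must vanish, because the ``total turning'' of $S$ and of the ellipse-normal field agree over a full revolution (both are closed convex curves winding once). This is where Lemma \ref{ig} and the area lemmas (Lemma \ref{T}, Lemma \ref{i0}) should come in — an integral of the form $\int_0^{2\pi} s\wedge d[\,\cdot\,]$ telescopes by Lemma \ref{i0}(2)–(3) to boundary terms that cancel over the full period, forcing $g$ to have both signs, hence a zero $\theta_0$ by the intermediate value theorem, and $v=s(\theta_0)$ works.

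I expect the main obstacle to be step (3): setting up the right continuous scalar whose vanishing is exactly condition (\ref{support}) and proving it must vanish somewhere. Parts (1) and (2) are essentially conjugate-diameter geometry of ellipses combined with smoothness and (P-$\rho S$), but (3) genuinely needs a global (integral or degree-theoretic) input, and getting the averaging identity to close — showing the obstruction integrates to zero around $S$ — is the delicate part; this is presumably where the bounded-variation parametrizations and the Riemann–Stieltjes machinery assembled in Section \ref{prelemma} earn their keep.
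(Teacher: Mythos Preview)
The paper's own proof of this lemma is a two-line citation: parts (1), (3), and the key implication ``tangency at $u$ $\Rightarrow$ tangency at $u^*$'' are quoted from Lemma~3.2 of \cite{BY2}, and (2) is obtained by invoking Lemma~3.3 of \cite{BY2}. So there is essentially no in-paper argument to compare against; you are supplying what the paper outsources.

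Your argument for (1) is correct and is the natural one: the midpoint $\tfrac12(u+u^*)=\rho w$ lies on the diameter through $w$, so $w$ and $u^*-u$ are conjugate directions of $C_u$, whence the tangent to $C_u$ at $w$ is parallel to $u^*-u$; and (P-$\rho S$) together with smoothness gives exactly the same supporting direction for $S$ at $w$. (Your final sentence about identifying this tangent with $\{u+\lambda[(1-2\rho^2)u+u^*]\}$ is confused: that line is the tangent to $C_u$ at $u$, not at $w$, and its direction is \emph{not} parallel to $u^*-u$ unless $\rho=1$. Drop it.)

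There is a genuine gap in your (2). The crux is the implication ``(\ref{support}) for $u$ $\Rightarrow$ (\ref{support2}) for $u^*$'', and neither of your two proposed routes works as stated. The ``direct check'' cannot work: (\ref{support}) is a condition on the supporting line of $S$ at $u$, while (\ref{support2}) is a condition on the supporting line of $S$ at $u^*$; these are a priori independent data about $S$, so no formal manipulation can turn one into the other without a substantive use of (P-$\rho S$). Your affine-symmetry argument has the same hole: the reflection $A$ of the plane fixing $C_u$ and swapping $u\leftrightarrow u^*$ does send $S$ to a convex curve $A(S)$ tangent to $C_u$ at $u^*$, but you give no reason why $A(S)=S$, and in general it is not. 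This step is exactly what the paper imports from \cite{BY2}, and it needs a real argument (roughly: use (P-$\rho S$) once more, applied to the chord of $S$ through $u^*$ that supports $\rho S$, to pin down the tangent direction of $S$ at $u^*$). The subsequent iteration and the identification $C_u=C_{u_i}$ also lean on this missing step, since without it you do not know that $u^{**}$ lies on $C_u$.

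For (3) your outline is in the right spirit---define a continuous signed defect from (\ref{support}) and find a zero---but you have not actually produced the averaging identity that forces a sign change, and invoking Lemmas \ref{T}, \ref{i0}, \ref{ig} by name is not a proof. A cleaner way to organise it: set $g(\theta)=s^\perp(\theta)\wedge\big[(1-2\rho^2)s(\theta)+s^*(\theta)\big]$, which is continuous and vanishes exactly when (\ref{support}) holds; then show that $g$ cannot keep a constant sign over $[0,2\pi]$ by comparing the total turning of $s^\perp$ with that of $(1-2\rho^2)s+s^*$ (both wind once), or by exhibiting two points where the ellipse $C_u$ crosses $S$ from opposite sides. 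This is again the content the paper defers to \cite{BY2}.
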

\begin{proof}
Lemma $3.2$ in \cite{BY2} proves (1), (3), and that if $C_u$ and $S$ are tangent at $u$, then $C_u$ and $S$ are tangent at $u^*$. The proof of Lemma $3.3$ in \cite{BY2} can be applied for every $u\in S$ such that $C_u$ and $S$ are tangent at $u$, and (2) holds.
\end{proof}





\section{$\rho$-ellipses and $\rho$-polygons: the special case $\rho\in M$}\label{specialcase}
We remind the reader the definition of $M$
$$M=\left\{ \rho\in (0,1) / \, \rho=\sqrt{(1+\cos\tfrac{2k\pi}{2m+1})/2}:\; k=1,2,\ldots , m;\:m=1,2,\ldots \right\}.$$

\begin{lem}\label{punto} Let $\rho \in M$. If $X$ fulfils \normalfont(P-$\rho S$), the following properties hold for every $v\in S$ such that $C_v$ and $S$ are tangent at $v$ (equivalently, for every $v$ that verifies (\ref{support})):
\begin{enumerate}

\item $P_v=\{v_1,v_2,...,v_n\}$ has $n=2m+1$ vertices, and $C_v$ and $S$ are tangent at every $v_i\in P_v$.

\item If $w=\frac1{2\rho} (v+v^*)$, $P_w$ has $n=2m+1$ vertices. Such as vertices are the points $w_{i}=\tfrac1{2\rho}(v_i+v_{i+1})$,  and $C_v$ and $S$ are tangent at every $w_i\in P_w$.

\item If $k$ is an odd number, $P_{w}=P_{-v}$. If $k$ is an even number, $P_w=P_v$.

\item $v_1\wedge v_2=...=v_{n-1}\wedge v_n=v_n\wedge v_1$.

\item 
 $\mathcal{A}(B_{v_1}^{v_2})=\ldots=\mathcal{A}(B_{v_{n-1}}^{v_n})=\mathcal{A}(B_{v_n}^{v_1}).$

\item The vertices of $P_v$ and $P_{-v}$ split $B$ into $2n$
disjoint sectors of equal area.

\end{enumerate}

\end{lem}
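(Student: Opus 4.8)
The plan is to show that the $\rho$‑polygon $P_v$ of $S$ through $v$, together with the polygon $P_w$ attached to $w=\tfrac1{2\rho}(v+v^*)$, coincides with the classical $\rho$‑polygon construction carried out inside the ellipse $C_v$, and then to read off (1)--(6) from the geometry of a regular $(2m+1)$‑gon inscribed in an ellipse. Write $\rho=\sqrt{(1+\cos\tfrac{2k\pi}{2m+1})/2}$ with $\gcd(k,2m+1)=1$ (a common factor can always be cancelled, leaving the denominator odd), so $\rho=\cos\tfrac{k\pi}{2m+1}$; fix an orientation‑preserving linear isomorphism $L$ carrying the ellipse $C_v$ onto a Euclidean circle, and let $R$ be the linear map with $LRL^{-1}$ equal to the rotation by $\tfrac{2k\pi}{2m+1}$. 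Since $C_v$ is the unit sphere of an inner product space, (P‑$\rho C_v$) holds, the $\rho$‑successor inside $C_v$ is unique (Lemma \ref{u} applied to $C_v$), and the classical $\rho$‑polygon of $C_v$ through any of its points $x$ is $\{x,Rx,\dots,R^{2m}x\}$, a regular $(2m+1)$‑gon. Because $v$ verifies (\ref{support}), $C_v$ and $S$ are tangent at $v$, so by Lemma \ref{elip}(2) they are tangent --- hence they meet --- at every point of $P_v\cup P_w$; in particular $P_v\cup P_w\subset C_v$.

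The crux is to prove $P_v=\{v,Rv,R^2v,\dots,R^{2m}v\}$ and $P_w=\{w,Rw,\dots\}$. The first arc is immediate: $v^*\in C_v$ and $\tfrac1{2\rho}(v+v^*)\in C_v$ by the very definition of $C_v$, so the chord $[v,v^*]$ of $C_v$ has its midpoint on $\rho C_v$ and therefore supports $\rho C_v$ there (a chord of an ellipse whose midpoint lies on a concentric homothet is tangent to that homothet at the midpoint); by uniqueness of the $C_v$‑successor, $v^*=Rv$. Propagating this along the polygon requires showing that (\ref{support}) is inherited by $v^*$, equivalently that $C_{v^*}=C_v$; granted this, the same argument applied successively (keeping all vertices on $C_v$ by Lemma \ref{elip}(2)) gives $v_{i+1}=Rv_i$ for all $i$, so $P_v$ has $n=2m+1$ vertices and $C_v$, $S$ are tangent at each $v_i$ --- which is (1) --- and, computing the $C_v$‑successor of an edge–midpoint of a regular polygon, $w_i=\tfrac1{2\rho}(v_i+v_{i+1})$ are exactly the vertices of $P_w$, at which $C_v$ and $S$ are tangent by Lemma \ref{elip}(2); this is (2). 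I expect this inheritance step to be the main obstacle: a priori one only knows $\tfrac1{2\rho}(v^*+v^{**})\in C_{v^*}$, and must upgrade this to membership in $C_v$. I would try to settle it either by a conic‑rigidity argument (two distinct origin‑centred conics meet in at most four points, hence cannot be tangent to $S$ at three points of $P_v$, which forces $C_{v^*}=C_v$ once enough vertices are known to carry (\ref{support})) or by a continuity argument in $v$, in the spirit of \cite{BY2}.

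Parts (3), (4), (5) then follow. Under $L$ the points of $P_v$ sit at the equally spaced angles $\theta_0+\tfrac{2\pi j}{2m+1}$, while those of $P_w=\{\tfrac1{2\rho}(v_i+v_{i+1})\}$ sit at $\theta_0+\tfrac{k\pi}{2m+1}(2j-1)$; since $2m+1$ is odd and $\gcd(k,2m+1)=1$, adding $\pi$ flips the parity of the multiple of $\tfrac{\pi}{2m+1}$, so $L(P_w)$ occupies exactly the "even" half‑angles if $k$ is even and exactly the "odd" ones if $k$ is odd, and comparing with $L(P_{-v})=-L(P_v)=L(P_v)+\pi$ (which, again by oddness of $2m+1$, occupies the odd half‑angles) yields $P_w=P_v$ for $k$ even and $P_w=P_{-v}$ for $k$ odd; this is (3). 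For (4), $v_i\wedge v_{i+1}=v_i\wedge Rv_i=\det(L^{-1})\sin\tfrac{2k\pi}{2m+1}$, independent of $i$. For (5), $\mathcal A(B_{v_i}^{v_{i+1}})=\tfrac12(v_i\wedge v_{i+1})+\mathcal A(T_{v_i}^{v_i^*})$ is constant by (4) and Lemma \ref{T}.

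Finally, (6). By Lemma \ref{pol1}(3) $P_{-v}=-P_v$, and since $n=2m+1$ is odd, $P_v\cap P_{-v}=\emptyset$ by Lemma \ref{pol1}(4), so $P_v\cup P_{-v}$ consists of $2n$ points which, by the above, lie on $C_v$ at the equally spaced angles $\theta_0+\tfrac{\pi\ell}{n}$; label them $p_0,p_1,\dots,p_{2n-1}$ in this cyclic order, so $p_\ell\in P_v\iff\ell$ even, $p_{\ell+n}=-p_\ell$, and $p_\ell^{*}=p_{\ell+2k}$ for every $\ell$. Put $a_\ell=\mathcal A(B_{p_\ell}^{p_{\ell+1}})$; these $2n$ sectors tile $B$, so $\sum_\ell a_\ell=\mathcal A(B)$. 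On one hand $\mathcal A(B_{p_\ell}^{p_\ell^{*}})=a_\ell+a_{\ell+1}+\dots+a_{\ell+2k-1}$ (the arc from $p_\ell$ to $p_{\ell+2k}$ passing through the intermediate $p$'s, valid since $2k<2n$); on the other $\mathcal A(B_{p_\ell}^{p_\ell^{*}})=\tfrac12(p_\ell\wedge p_\ell^{*})+\mathcal A(T_{p_\ell}^{p_\ell^{*}})$ is independent of $\ell$, by (4) together with the central symmetry of $S$ (so that $p_\ell\wedge p_\ell^{*}$ is the same constant whether $p_\ell\in P_v$ or $p_\ell\in P_{-v}$) and Lemma \ref{T}. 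Subtracting consecutive relations gives $a_{\ell+2k}=a_\ell$, while central symmetry of $S$ gives $a_{\ell+n}=a_\ell$; as $\gcd(2k,n)=1$ (because $n$ is odd and $\gcd(k,n)=1$), the sequence $(a_\ell)$ is constant, so every sector has area $\tfrac1{2n}\mathcal A(B)$, which is (6).
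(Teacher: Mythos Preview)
Your approach is the paper's: transfer everything to the ellipse $C_v$ and read off (1)--(6) from regular $(2m+1)$-gon geometry. The step you flag as ``the main obstacle'' --- that $(*)$ propagates from $v$ to $v^*$, equivalently that the $S$-polygon $P_v$ is simultaneously a $\rho$-polygon of $C_v$ (sides circumscribing $\rho C_v$) --- is indeed the crux, and you are right that it does not follow from the mere containment $P_v\cup P_w\subset C_v$. In the paper, however, this step is \emph{not} re-proved inside Lemma~\ref{punto}: it is absorbed into Lemma~\ref{elip}, whose proof is imported from \cite{BY2} (Lemmas~3.2 and~3.3). The paper simply invokes ``Lemma~\ref{elip}'' to assert that ``the vertices of $P_v$ and $P_w$ are always the vertices of $\rho$-polygons inscribed in the $\rho$-ellipse $C_v$ and circumscribed about its homothetic ellipse of ratio $\rho$'', and proceeds. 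So your obstacle is already discharged upstream; you should cite Lemma~\ref{elip} rather than attempt a fresh conic-rigidity or continuity argument (your conic-rigidity sketch, incidentally, would need more: knowing $C_v$ and $C_{v^*}$ share points of tangency with $S$ does not by itself bound $|C_v\cap C_{v^*}|$).

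Your treatments of (3)--(5) are correct and agree with the paper's (which is terser, simply noting that once $P_v$, $P_w$ are $\rho$-polygons of the ellipse $C_v$, these statements reduce to the elementary ellipse case). For (6) you take a genuinely different and rather clean route: writing the $2n$ elementary sector areas as $a_0,\dots,a_{2n-1}$, you use constancy of $\mathcal A(B_{p_\ell}^{p_\ell^*})=a_\ell+\dots+a_{\ell+2k-1}$ to get $a_{\ell+2k}=a_\ell$, combine with central symmetry $a_{\ell+n}=a_\ell$, and finish via $\gcd(2k,n)=1$. The paper instead shows directly that $\mathcal A(B_{v_i}^{-v_{\sigma(i)}})$ is independent of $i$ by decomposing $\mathcal A(B_{v_i}^{v_{i+1}})=\mathcal A(B_{-v_{\sigma(i)}}^{-v_{\sigma(i+1)}})$ in two ways and subtracting, then appeals to symmetry and the oddness of $n$. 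Both arguments are valid; yours is more arithmetical, the paper's more geometric.
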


\begin{proof}
Lema \ref{elip} ensures the existence of $v\in S$ such that $C_v$ and $S$ are tangent at every vertex of $P_v\cup P_w$.
Since $C_v$ is an ellipse, $P_v$ has $n=2m+1$ vertices for $\rho\in M$ (see Example 1 in \cite{BY2} or the comments at the beginning of Section \ref{ellipsepolygon}) and (1) holds, as well as (2).

It is easy to see that (3) and (4) are true when $S$ is an ellipse (see Figure \ref{dibujopoligonos7lados}). But in the general case, the vertices of $P_v$ and $P_w$ are always the vertices of $\rho$-polygons  inscribed in the $\rho$-ellipse $C_v$ and circumscribed about its homothetic ellipse of ratio $\rho$ (Lemma \ref{elip}). Therefore, (3) and (4) hold for every $S$.



From (4) and Lemma \ref{T},  (5) is obtained.

%
%
%

Let us see (6). Since $n=2m+1$ is an odd number, then (see (4) of Lemma \ref{pol1}),
the vertices of $P_v\cup P_{-v}$ determine $2n$ different vectors in $S\cap C_v$: $\{v_1$, ..., $v_n$, $-v_{1}$,..., $-v_{n}\}$.
For every $v_i\in P_v$, let $-v_{\sigma (i)}\in P_{-v}$ be such that  $v_i\prec -v_{\sigma(i)}$ and there is not any other vertex of  $P_v\cup P_{-v}$ between (counterclockwise) $v_i$ and $-v_{\sigma (i)}$ (see Figure \ref{dibujopoligonos7lados}).

Due to the symmetry of $S$ and (5), $\mathcal{A}(B_{v_i}^{v_{i+1}})=\mathcal{A}(B_{-v_{\sigma(i)}}^{-v_{\sigma(i+1)}})$. Using arguments similar to those in Lemma \ref{T}, it holds that:
$$
\begin{array}{rcl}
\mathcal{A}(B_{v_i}^{v_{i+1}})&=&\mathcal{A}(B_{v_i}^{-v_{\sigma(i)}})+\mathcal{A}(B_{-v_{\sigma(i)}}^{v_{i+1}}),\\
\mathcal{A}(B_{-v_{\sigma(i)}}^{-v_{\sigma(i+1)}})&=&\mathcal{A}(B_{-v_{\sigma(i)}}^{v_{i+1}})+\mathcal{A}(B_{v_{i+1}}^{-v_{\sigma(i+1)}}).
\end{array}$$
Hence, the $n$ disjoint sectors $B_{v_i}^{-v_{\sigma (i)}}$ have the same area. Since $n$ is an odd number (and again the symmetry of $S$), (6) holds.




\end{proof}

\begin{figure}[ht]
\begin{center}
\scalebox{0.84}{\includegraphics[viewport=127 502 471 663,clip]{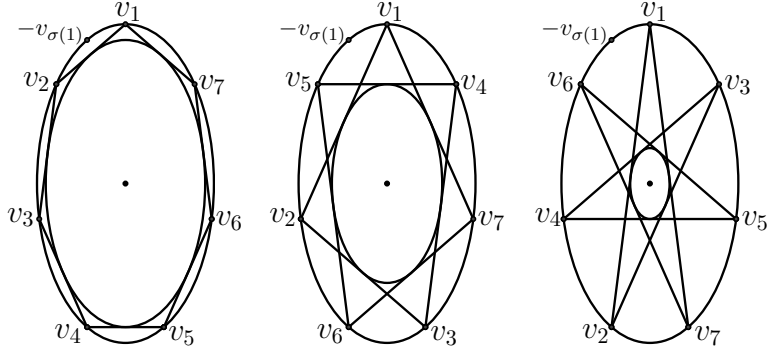}}\\
\caption{$\rho$-polygon $P_v$ with $n=7$ and $k=1$ (left), $k=2$ (center), and $k=3$ (right).}\label{dibujopoligonos7lados}
\end{center}
\end{figure}


\begin{lem}\label{todos} Let $\rho \in M$. If $X$ fulfils \normalfont(P-$\rho S$), then  for every $u\in S$ it is verified that $P_u=\{u_1, u_2, u_3,...\}$ has $n=2m+1$ vertices and $C_u$ supports $S$ at every vertex of $P_u$. Besides,  the function  $u\in S\to u\wedge u^*$ is constant, and $\mathcal{A}(B_u^v)=\mathcal{A}(B_{u^*}^{v^*})$ for any $v\in S$ such that $u\prec v$.
\end{lem}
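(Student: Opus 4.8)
The plan is to establish the three assertions in the order (i) every $P_u$ has exactly $n=2m+1$ vertices and $C_u$ is tangent to $S$ at each of them; (ii) the function $u\mapsto u\wedge u^*$ is constant; (iii) $\mathcal{A}(B_u^v)=\mathcal{A}(B_{u^*}^{v^*})$ whenever $u\prec v$. Assertion (iii) is then almost free: the identity obtained inside the proof of Lemma~\ref{T}, namely $\mathcal{A}(B_{u^*}^{v^*})-\mathcal{A}(B_u^v)=\tfrac12(v\wedge v^*)-\tfrac12(u\wedge u^*)$ for $u\prec v$, shows that (iii) is equivalent to (ii). So the real work lies in (i) and (ii).

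For (i) I would begin with the point $v\in S$ such that $C_v$ and $S$ are tangent at $v$ provided by Lemma~\ref{elip}(3), and feed it into Lemma~\ref{punto}: $P_v$ has $n=2m+1$ vertices, $C_v$ is tangent to $S$ at every vertex of $P_v$ and of $P_w$ with $w=\tfrac1{2\rho}(v+v^*)$, and the numbers $v_i\wedge v_{i+1}$ and $\mathcal{A}(B_{v_i}^{v_{i+1}})$ do not depend on $i$. The map $g\colon u\mapsto u^*$ is an orientation-preserving homeomorphism of the circle $S$ (Lemma~\ref{u}) which has the periodic orbit $P_v$; hence its rotation number equals $k/n$ in lowest terms and all its periodic orbits have period exactly $n$. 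Lemma~\ref{pol1}(1) and the finiteness of $P_v$ rule out that any $P_u$ is dense. The substantive point is to exclude an infinite non-dense orbit and to promote the tangency of $C_u$ and $S$ to an arbitrary starting point $u$, i.e.\ to prove that every $u\in S$ satisfies $(\ref{support})$. For the exclusion I would argue dynamically and metrically at once: the closure of a hypothetical infinite orbit contains a periodic orbit of period $n$, the orbit accumulates from both sides on finite sets, and I would run the identity above along the orbit together with the constancy of $\mathcal{A}(T_u^{u^*})$ (Lemma~\ref{T}) and its strict positivity, which strict convexity (Lemma~\ref{r}(1)) forces, to reach a contradiction. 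For the tangency I would use that $C_u$ and $S$ are already tangent at $\tfrac1{2\rho}(u+u^*)$ for every $u$ (Lemma~\ref{elip}(1)), together with the propagation of Lemma~\ref{elip}(2) and the uniqueness of the origin-centred ellipse through three prescribed non-collinear points, to show that $G:=\{u\in S:\ u\ \text{satisfies}\ (\ref{support})\}$ is closed, symmetric, and invariant both under $g$ and under the midpoint map $u\mapsto\tfrac1{2\rho}(u+u^*)$, and is therefore all of $S$. Once $u\in G$ for every $u$, Lemma~\ref{elip}(2) exhibits $P_u$ as a $\rho$-polygon inscribed in the ellipse $C_u$ and circumscribed about $\rho C_u$, so it closes up with $n$ vertices by the ellipse case recalled in Section~\ref{ellipsepolygon}, completing (i).

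With (i) available, (ii) is short. Fix $u$ and let $L$ be a linear automorphism of $X$ carrying $C_u$ onto a Euclidean circle; then $L(P_u)$ is inscribed in that circle and circumscribed about the concentric circle scaled by $\rho$, so its consecutive vertices subtend a constant angle and $Lu_i\wedge Lu_{i+1}$ is independent of $i$; since $u_i\wedge u_{i+1}=(\det L)^{-1}(Lu_i\wedge Lu_{i+1})$, the numbers $u_i\wedge u_{i+1}$ are independent of $i$ for this fixed $u$. By Lemma~\ref{ig} the cyclic sum $u_1\wedge u_2+\dots+u_{n-1}\wedge u_n+u_n\wedge u_1$ is the same for all $u$, and by Lemma~\ref{punto}(4) it equals $n(v_1\wedge v_2)$; hence each summand equals $v_1\wedge v_2$, and in particular $u\wedge u^*=u_1\wedge u_2=v_1\wedge v_2$ for every $u\in S$. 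This is (ii), and (iii) now follows from (ii) and the displayed identity.

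The step I expect to be the genuine obstacle is the globalization inside (i): transferring the rigidity that Lemma~\ref{punto} yields along the single orbit $P_v$ to every starting point — equivalently, proving $G=S$ and discarding infinite $\rho$-polygons. Should the combined dynamical and area argument not close cleanly, a natural alternative is to prove (ii) by a direct Riemann--Stieltjes computation with the parametrizations $s$ and $s^*$ in the spirit of Lemmas~\ref{i0} and~\ref{T}, and then deduce that every $P_u$ is finite with $n$ vertices from the resulting fact that the area measure on $S$ is a non-atomic $g$-invariant measure of full support, which forces $g$ to be topologically conjugate to the rotation by $k/n$ and hence $g^{\,n}=\mathrm{id}$.
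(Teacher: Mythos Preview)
Your reductions of (ii) to (i) via Lemma~\ref{ig} and Lemma~\ref{punto}(4), and of (iii) to (ii) via the identity in the proof of Lemma~\ref{T}, are correct. The gap is in (i). The assertion that $G=\{u\in S:u\text{ satisfies }(\ref{support})\}$ equals $S$ because it is closed, symmetric, and invariant under $g$ and under the midpoint map is false: the finite set $P_v\cup P_{-v}$ already has all four properties (midpoint-invariance follows from Lemma~\ref{punto}(3), since $P_w$ equals $P_{-v}$ or $P_v$ according as $k$ is odd or even), yet it is not $S$. Invariance under those maps cannot manufacture a point of $G$ strictly between two consecutive points of $P_v\cup P_{-v}$. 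The rotation-number observation is likewise insufficient: a circle homeomorphism with rational rotation number and a periodic orbit may still have non-periodic orbits asymptotic to the periodic ones, so this alone does not force every $P_u$ to close up.

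The paper supplies exactly the missing bisection step. Using the equal-area splitting of Lemma~\ref{punto}(6) together with Lemma~\ref{i0}, it shows that the integral
\[
\int\bigl[(1-\rho^2)s+\rho^2\mu\,s^\perp\bigr]\wedge d\bigl[s-\mu\,s^\perp\bigr],
\]
taken between the parameters of any two consecutive points of $P_v\cup P_{-v}$, vanishes; an intermediate-value argument then produces a new $\bar v\in G$ strictly between them. Since $\bar v$ again satisfies the conclusions of Lemma~\ref{punto}, the computation iterates between any two points of $G$, giving density of $G$ in $S$ and hence $G=S$. Your fallback (prove (ii) directly and then deduce $g^{\,n}=\mathrm{id}$ from an invariant-measure argument) is in the right spirit, but Lemma~\ref{T} yields only that $\mathcal{A}(T_u^{u^*})$ is constant, not that $u\wedge u^*$ is; obtaining the latter without first establishing the density of $G$ appears no easier, and the paper does not attempt it.
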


\begin{proof}We remind that if $X$ fulfils (P-$\rho S$) and $u\in S$, then $C_u$ and $S$ are tangent at $u$ if and only 
$u$ has  property (\ref{support}). The proof is organized in four steps.

\textit{Step 1: there exists $v\in S$ such that the condition} (\ref{support}) \textit{is verified for every $z\in P_v \cup P_{-v}.$}

Since $S$ and the $\rho$-ellipses are symmetric, it is deduced (by Lemma \ref{elip}, Lemma \ref{punto}, and (3) and (4) of Lemma \ref{pol1}) that there exists $v\in S$ such that the condition (\ref{support}) is verified for every $z\in P_v \cup P_{-v}.$


\medskip

Let $P_v=\{v_1,v_2,\dots,v_n\}$ be the polygon generated by $v$. As in Lemma \ref{punto}, 
let us denote $-v_{\sigma (i)}$ to the unique point in $P_{-v}$ such that $v_i\prec -v_{\sigma(i)}$ and there is not any other vertex of  $P_v\cup P_{-v}$ between (counterclockwise) $v_i$ and $-v_{\sigma(i)}$.

\textit{Step 2: for every $i\in \{1,2\dots,n\}$, there exists $\bar{v}\in S$   such that   $\bar{v}$ verifies} (\ref{support}) and $v_i\prec\bar{v}\prec -v_{\sigma(i)}$.

Without loss of generality, let us assume that $i=1$. Let us consider a parametrization
$s:\theta\in [0, 2\pi]\to s(\theta)\in S$ of $S$. By (3) of Lemma \ref{r}, there exist $0\leq \theta_1<\theta'_1\leq 2\pi$ such that
$$
v_1=\rho[s(\theta_1)-\mu(\theta_1)s^\perp(\theta_1)],  \quad -v_{\sigma(1)}= \rho[s(\theta'_1)-\mu(\theta'_1)s^\perp(\theta'_1)],
$$
 $$
 v_2=\rho[s(\theta_{1})+\mu(\theta_{1})s^\perp(\theta_{1})], \quad  -v_{\sigma(2)}= \rho[s(\theta'_{1})+\mu(\theta'_{1})s^\perp(\theta'_{1})].
 $$
It is proved just some lines below that
\begin{equation}\label{integral}
\int_{\theta_1}^{\theta'_1}[(1-\rho^2)s(\theta)+\rho^2\mu(\theta)s^\perp(\theta)]\wedge
d[s(\theta)-\mu(\theta)s^\perp(\theta)]=0,
\end{equation}
and as a consequence, there exists $\theta_1<\bar{\theta}<\theta'_1$ such that
$$
s(\bar{\theta})-\mu(\bar{\theta})s^\perp(\bar{\theta}) \; \perp \; (1-\rho^2)s(\bar{\theta})+\rho^2\mu(\bar{\theta})s^ \perp(\bar{\theta}).
$$
Thus the points
$$\bar{v}:=\rho[s(\bar{\theta})-\mu(\bar{\theta})s^\perp(\bar{\theta})], \quad \bar{v}^*:=\rho[s(\bar{\theta})+\mu(\bar{\theta})s^\perp(\bar{\theta})],$$
 verify $\bar{v}\;\perp\;(1-2\rho^2)\bar{v}+\bar{v}^*$, with $v_1\prec\bar{v}\prec -v_{\sigma(1)},$ as Step 2 claims.
\smallskip

In order to see (\ref{integral}), the integral is separated into four parts as follows:
\begin{multline*}\int_{\theta_1}^{\theta'_1}[(1-\rho^2)s(\theta)+\rho^2\mu(\theta)s^\perp(\theta)]\wedge
d[s(\theta)-\mu(\theta)s^\perp(\theta)]=\\
(1-\rho^2)\int_{\theta_1}^{\theta'_1}s(\theta)\wedge d s(\theta)
-(1-\rho^2)\int_{\theta_1}^{\theta'_1}s(\theta)\wedge
d[\mu(\theta)s^\perp(\theta)]\\
+\rho^2\int_{\theta_1}^{\theta'_1}\mu(\theta)s^\perp(\theta)\wedge
d s(\theta)
-\rho^2\int_{\theta_1}^{\theta'_1}\mu(\theta)s^\perp(\theta)\wedge
d[\mu(\theta)s^\perp(\theta)].
\end{multline*}
Let us denote $w_1$ and $-w_{\sigma(1)}$, respectively, to $\frac{1}{2\rho}(v_1+v_2)$ and $-\frac{1}{2\rho}(v_{\sigma(1)}+v_{\sigma(2)})$. Then
$w_{1}=s(\theta_1)$ and $-w_{\sigma(1)}=s(\theta'_1)$. By the calculus of area (see (\ref{area})), the first part is
$$(1-\rho^2)\int_{\theta_1}^{\theta'_1}s(\theta)\wedge
ds(\theta)=2(1-\rho^2) \; \mathcal{A}(B_{w_1}^{-w_{\sigma(1)}}).$$
By (3) of Lemma \ref{i0} and (4) of Lemma \ref{punto}, the second part is
\begin{multline*}\int_{\theta_1}^{\theta'_1}s(\theta)\wedge
d[\mu(\theta)s^\perp(\theta)]=\\
s(\theta'_1)\wedge\mu(\theta'_1)s^\perp(\theta'_1)-s(\theta_1)\wedge\mu(\theta_1)s^\perp(\theta_1)=\\
\tfrac1{2\rho^2}[ (-v_{\sigma(1)}\wedge -v_{\sigma(2)}) - (v_1\wedge v_2)]=0.
\end{multline*}
And by (1) of Lemma \ref{i0}, the third part is
$$
\rho^2 \int_{\theta_1}^{\theta'_1}\mu(\theta)s^\perp(\theta)\wedge d s(\theta)=0.
$$

Regarding the last part of the decomposition, using again the calculus of the area (\ref{area}), and the same statements of Lemma \ref{i0} and Lemma \ref{punto}, it is deduced that
\begin{multline*}\mathcal{A}(B_{v_1}^{-v_{\sigma(1)}})=\\
\frac{\rho^2}2\int_{\theta_1}^{\theta'_1}
[s(\theta)-\mu(\theta)s^\perp(\theta)]\wedge
d[s(\theta)-\mu(\theta)s^\perp(\theta)]=\\
\frac{\rho^2}2\int_{\theta_1}^{\theta'_1}s(\theta)\wedge
d s(\theta)
-\frac{\rho^2}2\int_{\theta_1}^{\theta'_1}s(\theta)\wedge d[\mu(\theta)s^\perp(\theta)]\\
-\frac{\rho^2}2\int_{\theta_1}^{\theta'_1}\mu(\theta)s^\perp(\theta)\wedge
d s(\theta)
+\frac{\rho^2}2\int_{\theta_1}^{\theta'_1}\mu(\theta)s^\perp(\theta)\wedge d[\mu(\theta)s^\perp(\theta)]=\\
\rho^2 \mathcal{A}(B_{w_1}^{-w_{\sigma(1)}})
+\frac{\rho^2}2\int_{\theta_1}^{\theta'_1}
\mu(\theta)s^\perp(\theta)\wedge d[\mu(\theta)s^\perp(\theta)],
\end{multline*}
and concluded that
$$\rho^2 \int_{\theta_i}^{\theta'_i}
\mu(\theta)s^\perp(\theta)\wedge d[\mu(\theta)s^\perp(\theta)]= 2 \mathcal{A}(B_{v_1}^{-v_{\sigma(1)}})-2\rho^2 \mathcal{A}(B_{w_1}^{-w_{\sigma(1)}}).
$$
Since  $\mathcal{A}(B_{v_1}^{-v_{\sigma(i)}})=\mathcal{A}(B_{w_1}^{-w_{\sigma(1)}})$  by (3) and (6) of Lemma \ref{punto}, then
$$\rho^2 \int_{\theta_i}^{\theta'_i}
\mu(\theta)s^\perp(\theta)\wedge d[\mu(\theta)s^\perp(\theta)]=2(1-\rho^2) \mathcal{A}(B_{w_1}^{-w_{\sigma(1)}}),$$
and the equality (\ref{integral}) holds.


\medskip

\textit{Step 3: If $\bar{v}$ verifies} (\ref{support}) \textit{and $v_i\prec\bar{v}\prec -v_{\sigma(i)}$, then there exist $\bar{v}', \bar{v}''\in S$ such that  both $\bar{v}'$ and $\bar{v}''$ verify} (\ref{support})\textit{, and   $v_i\prec\bar{v}'\prec \bar{v}\prec \bar{v}''\prec -v_{\sigma(i)}$.}

Let us prove the existence of $\bar{v}'$ (the existence of $\bar{v}''$ can be proved similarly). Only for simplicity, let us assume that $i=1$. Let us consider
$$\bar{v}_{1}=\bar{v},\hspace{1cm}\bar{v}_{j+1}=\bar{v}_j^*\hspace{1cm}P_{\bar{v}}=\{\bar{v}_{1}, \bar{v}_{2},\ldots, \bar{v}_n\}$$
$$\bar{w_j}=\frac1{2\rho}(\bar{v_j}+\bar{v}_{j+1})\hspace{1cm} \bar{w}_{j+1}=\bar{w}_j^* \hspace{1cm}P_{\bar{w}}=\{\bar{w}_{1}, \bar{w}_{2},\ldots, \bar{w}_n\}$$

Since $\bar{v}$ verifies (\ref{support}), then ((2) of Lemma \ref{elip}) $C_{\bar{v}}$ and $S$ are tangent at every point of $P_{\bar{v}}\cup P_{\bar{w}}$. Moreover:
\begin{enumerate}
	\item[(a)] ${\bar{v}}_j\wedge {\bar{v}}_{j+1}= v_i\wedge v_{i+1}$ (by (4) of Lemma \ref{punto} and Lemma \ref{ig}).
	\item[(b)] $\mathcal{A}(B_{v_i\bar{v}_i})$ is constant (by (a) and Lemma \ref{T}).
	\item[(c)] $\mathcal{A}(B_{v_i{\bar{v}_i}})=\mathcal{A}(B_{w_i\bar{w}_i})$ (by (3) of Lemma \ref{punto}).
\end{enumerate}
Using the above properties, the existence of $\bar{v}'$ of this Step 3 (such that $v_i\prec\bar{v}'\prec \bar{v}$) can be proved applying the arguments of Step 2 to $v_i$ and $\bar{v}$ (instead to $v_i$ and $-v_{\sigma(i)}$).

%

\medskip

\textit{Step 4: The set of points of $S$ that verify} (\ref{support}) \textit{is dense on $S$.}

Let $x$, $y$ be a pair of points of $S$ that verify (\ref{support}).
Step 3 can be applied to $x$ and $y$ (instead of $v_i$ and $\bar{v}$) because $x$ and $y$ have the required properties: both verify property (\ref{support}) and the conditions for $x$ and $y$ equivalent to (a), (b), and (c) remind true (by the same reasons). Therefore, there exists $z\in S$ such that $z$ verifies (\ref{support}) and $x\prec z\prec y$.

As a consequence of Step 4, the statements of Lemma \ref{todos} are true for every $u\in S$. Particularly, because conditions similar to (a) and to (b) are verified for any pair of points of $S$ (instead of $v_i$ and $\bar{v}_i$), then $u\wedge u^*=v\wedge v^*$ and $\mathcal{A}(B_u^v)=\mathcal{A}(B_{u^*}^{v^*})$ for any $u,v\in S$ such that $u\prec v$.


\end{proof}

\section{Main result}\label{maintheo}
The last lemma describes some properties of a natural parametrization $s$ and the related parametrization $s^*$.

\begin{lem}\label{prop}
Let $\rho \in M$ and  $s:[0,2\pi]\to S$ be a natural parametrization for $S$. If $X$ fulfils \normalfont(P-$\rho S$), then:

{\rm(i)} $s$ is continuously differentiable and there is a continuous function $p:[0,2\pi]\to\mathbb{R}_+$ such that
$s'(\theta)=p(\theta)s^\perp(\theta)$.

{\rm(ii)} $s^*$ is continuously differentiable and there is a continuous function $q:[0,2\pi]\to\mathbb{R}_+$ such that
$s^{*\prime}(\theta)=q(\theta)s^{*\perp}(\theta)$.
\end{lem}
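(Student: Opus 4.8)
The plan is to establish differentiability and the orthogonality relation $s'(\theta)=p(\theta)s^\perp(\theta)$ by exploiting the uniform distribution of area coming from Lemma \ref{todos}. The key new input is that, for $\rho\in M$, the area function is genuinely homogeneous along the boundary: by Lemma \ref{todos}, $u\wedge u^*$ is constant and $\mathcal{A}(B_u^v)=\mathcal{A}(B_{u^*}^{v^*})$ whenever $u\prec v$. Combined with the density (on $S$) of the points satisfying $(*)$, this should force the parametrization to spread area uniformly, from which smoothness follows.

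\medskip

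\textbf{First} I would reparametrize $S$ by a ``pseudo-arclength'' $\tau$ defined so that $\mathcal{A}(B_{u_0}^{s(\tau)})=\tfrac{\tau}{2\pi}\,\mathcal{A}(B)$ for a fixed base point $u_0$; this is well defined because $v\mapsto\mathcal{A}(B_{u_0}^v)$ is continuous and strictly increasing (using that $X$ is strictly convex, by (1) of Lemma \ref{r}), hence a homeomorphism of $S$ onto $[0,\mathcal{A}(B)]$. In this parametrization I claim the star map $u\mapsto u^*$ becomes a \emph{rotation}: from $\mathcal{A}(B_u^v)=\mathcal{A}(B_{u^*}^{v^*})$ one gets that $t\mapsto t^*$ preserves the uniform measure $d\tau$, and since it is an orientation-preserving homeomorphism of the circle preserving a fixed probability measure with full support, it is conjugate to a rotation; but here it literally \emph{is} a translation in the $\tau$-coordinate because the measure is already the reference measure. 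The rotation number is $k/(2m+1)$ since $P_u$ closes up after $2m+1$ steps (Lemma \ref{todos}) and, by Lemma \ref{punto}, the sectors between consecutive vertices all have area $\mathcal{A}(B)/(2m+1)$ — more precisely $(r+1)$ full sectors per revolution as in Lemma \ref{ig}, giving rotation number $k/(2m+1)$ in lowest terms.

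\medskip

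\textbf{Next}, having identified $s^*$ as a rotation of the $\tau$-circle, I would relate the natural parametrization $s(\theta)$ to $\tau$. The natural parametrization satisfies $s(\theta)=\|(\cos\theta,\sin\theta)\|^{-1}(\cos\theta,\sin\theta)$, which is continuously differentiable as a map into $\mathbb{R}^2\setminus\{0\}$ \emph{provided} the norm is $C^1$ away from $0$; smoothness of the norm is exactly strict convexity plus smoothness of $S$, i.e. regularity from Lemma \ref{r}(1) — so $S$ being a regular convex curve gives $s\in C^1$ directly, and $s'(\theta)$ is a nonzero tangent vector to $S$ at $s(\theta)$ for every $\theta$. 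Then $s'(\theta)$ points in the direction of the (unique, by smoothness) tangent line at $s(\theta)$; since $u\perp u^\perp$ means precisely that the tangent line at $u$ is spanned by $u^\perp$ (the supporting-line characterization of Birkhoff orthogonality stated in the introduction), I conclude $s'(\theta)$ is a positive scalar multiple of $s^\perp(\theta)$ — positive because both point in the counterclockwise direction — and I set $p(\theta)=\|s'(\theta)\|>0$, continuous since $s'$ is continuous and nonvanishing. This proves (i). Part (ii) is then immediate: $s^*$ is, up to the smooth change of variable between $\theta$ and $\tau$ and the rotation in $\tau$, the same kind of regular convex-curve parametrization, so it is $C^1$ with nonvanishing derivative, and the Birkhoff-orthogonality argument gives $s^{*\prime}(\theta)=q(\theta)s^{*\perp}(\theta)$ with $q$ continuous and positive — alternatively, differentiate the identity $s^*(\theta)=s(\theta)^*$ using that the star map is a $C^1$ diffeomorphism in $\tau$-coordinates.

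\medskip

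\textbf{The main obstacle} I expect is upgrading the regularity of $S$ from the bare ``strictly convex and smooth'' of Lemma \ref{r}(1) to genuine $C^1$ smoothness of the \emph{parametrization} (i.e., a nonvanishing continuous tangent). Smoothness of a convex curve in the sense of unique supporting lines does not by itself give a $C^1$ arclength parametrization — one must rule out corners in the parametrization, which here is the content of the area-equidistribution forced by $\rho\in M$ via Lemma \ref{todos}. The delicate point is showing the tangent direction $\theta\mapsto s^\perp(\theta)$ is continuous (which is Lemma \ref{r}(2), the homeomorphism property of $u\mapsto u^\perp$) \emph{and} that the ``speed'' $p(\theta)$ does not vanish; vanishing speed would correspond to a flat segment in $S$, contradicting strict convexity, while a jump in speed is excluded because the area swept, $\tfrac12\int s\wedge ds = \tfrac12\int s\wedge s^\perp\,p\,d\theta$, must match the uniform-measure normalization on every subarc, forcing $p$ to be (continuous and) comparable to a fixed positive function. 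I would therefore spend the most care on the step ``$s$ regular convex curve $\Rightarrow s\in C^1$ with $s'\neq0$,'' citing standard convex-geometry facts (a convex curve that is both strictly convex and smooth has a continuously turning tangent when reparametrized by a measure with full support equivalent to arclength) and checking that the equidistribution from Lemma \ref{todos} supplies exactly such a measure.
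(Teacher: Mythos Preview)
Your strategy is sound but differs from the paper's, and your ``main obstacle'' is a phantom. The paper does not argue from scratch: it notes that smoothness (Lemma~\ref{r}), the constancy of $u\wedge u^*$, the continuity of $s,s^*,s^\perp$, and the nonvanishing of $s'(\theta)\wedge s^*(\theta)$ (which follows from property~(\ref{support}) holding at \emph{every} point, by Lemma~\ref{todos}, together with $1-2\rho^2\neq 0$) are exactly the hypotheses used in Lemma~2.8 of \cite{BY1} for $\rho=\tfrac12$, and then transports that proof verbatim. Your route is more self-contained and conceptually cleaner in one respect: for (i) you observe that smoothness of the norm in the Birkhoff sense means Gateaux differentiability at every nonzero point, and a convex function on $\mathbb{R}^2$ that is everywhere differentiable is automatically $C^1$ (Rockafellar, Theorem~25.5). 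This already gives $s\in C^1$, and $s'(\theta)\neq 0$ is immediate from the radial form $s(\theta)=r(\theta)(\cos\theta,\sin\theta)$, since the angular component $r(\theta)(-\sin\theta,\cos\theta)$ never vanishes --- no appeal to strict convexity, flat segments, or area equidistribution is needed here, and in fact (i) holds for \emph{any} $\rho$ with (P-$\rho S$), not just $\rho\in M$. Your hedging in the last paragraph is therefore unnecessary. For (ii) your area-reparametrization idea is correct and genuinely different: since $\mathcal{A}(B_u^{u^*})=\tfrac12(u\wedge u^*)+\mathcal{A}(T_u^{u^*})$ is constant (Lemmas~\ref{T} and~\ref{todos}), the map $u\mapsto u^*$ is a rigid translation in the $\tau$-coordinate, and $\theta\mapsto\tau$ is a $C^1$ diffeomorphism because $d\tau/d\theta=\tfrac12\, s\wedge s'=\tfrac12 r(\theta)^2>0$; composing gives $s^*\in C^1$. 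The paper's import of \cite{BY1} presumably reaches the same conclusion via a direct analytic computation using $u\wedge u^*=\text{const}$ and $s'\wedge s^*\neq 0$, whereas your dynamical picture makes the role of Lemma~\ref{todos} more transparent.
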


%
%
%

\begin{proof}
	The following conditions holds: $X$ is smooth (by Lemma \ref{b}); the function $u\in S\to u\wedge u^*$ is constant (by Lemma \ref{todos}); and
		 $s$, $s^*$, and $s^{\perp}$ are continuous functions (see Section \ref{prelemma}). Besides, $s'(\theta)\wedge s^*(\theta) \neq 0$ as a consequence
of $s(\theta)\perp (1-2\rho^2)s(\theta)+s^*(\theta)$ for every  $\theta\in[0,2\pi]$ (by Lemma \ref{todos} and because $1-2\rho^2\neq 0$).
Therefore, the proof of the statement for the case	$\rho=\tfrac12$ (Lemma 2.8 in \cite{BY1}) can be rewritten for $\rho\in M$.
\end{proof}

And finally, the main result is presented.
\begin{teo}\label{teo}
Given the set
$$
M=\left\{ \rho\in (0,1) / \, \rho=\sqrt{(1+\cos\tfrac{2k\pi}{2m+1})/2}:\; k=1,2,\ldots , m;\:m=1,2,\ldots \right\},
$$
a real normed space $X$ is an i.p.s. if and only if there exists $\rho \in M$ such that
$X$ fulfils
\begin{equation*}
u,v\in S,\:\inf_{t\in[0,1]}\|tu+(1-t)v\|=\rho \: \Rightarrow \:
\tfrac12u+\tfrac12v\in \rho S.\tag{P-$\rho S$}
\end{equation*}

\end{teo}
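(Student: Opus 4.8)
The plan is to establish the nontrivial implication: if $X$ satisfies (P-$\rho S$) for some $\rho\in M$, then $X$ is an inner product space. (The converse is classical, since in an i.p.s. every chord of $S$ supports a homothetic sphere at its midpoint by the nonbias condition.) As set up in the introduction, it suffices to treat $X$ as a $2$-dimensional real normed space. The whole machinery of Sections \ref{prelemma}--\ref{specialcase} has been arranged so that, under (P-$\rho S$) with $\rho\in M$, the space $X$ is forced to be regular (Lemma \ref{r}), every point lies on a $(2m+1)$-gon inscribed in $S$ and circumscribed about $\rho S$, the $\rho$-ellipse $C_u$ is tangent to $S$ at every vertex of $P_u$ for \emph{every} $u\in S$ (Lemma \ref{todos}), the map $u\mapsto u\wedge u^*$ is constant, and $\mathcal{A}(B_u^v)=\mathcal{A}(B_{u^*}^{v^*})$ whenever $u\prec v$ (Lemma \ref{todos}). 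Finally Lemma \ref{prop} gives the differentiability: $s'(\theta)=p(\theta)s^\perp(\theta)$ and $s^{*\prime}(\theta)=q(\theta)s^{*\perp}(\theta)$ with $p,q$ continuous and positive.

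**Key steps.** First I would fix a natural parametrization $s$ and invoke Lemma \ref{prop} to write $s'=p\,s^\perp$. The area-preservation statement $\mathcal{A}(B_{s(\alpha)}^{s(\beta)})=\mathcal{A}(B_{s^*(\alpha)}^{s^*(\beta)})$ for all $\alpha<\beta$, combined with the area formula (\ref{area}), yields $s(\theta)\wedge s'(\theta)=s^*(\theta)\wedge s^{*\prime}(\theta)$ as functions of $\theta$ (differentiating the integral identity in $\beta$). Since $\rho S$ is supported by $[s(\theta),s^*(\theta)]$ at $w(\theta):=\tfrac12(s(\theta)+s^*(\theta))$ and $C_u$ is tangent to $S$ at $u$ and at $u^*$ for every $u$, the Birkhoff-orthogonality relations ($*$) and ($**$), namely $s(\theta)\perp(1-2\rho^2)s(\theta)+s^*(\theta)$ and $s^*(\theta)\perp -s(\theta)-(1-2\rho^2)s^*(\theta)$, hold for every $\theta$. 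Because $X$ is smooth, these pin down the tangent direction to $S$: $s'(\theta)$ is parallel to $(1-2\rho^2)s(\theta)+s^*(\theta)$ and $s^{*\prime}(\theta)$ is parallel to $s(\theta)+(1-2\rho^2)s^*(\theta)$. Writing $s^*(\theta)=a(\theta)s(\theta)+b(\theta)s^\perp(\theta)$ and using $u\wedge u^*\equiv$ const together with these collinearity conditions turns the problem into a small system of ODEs for the coefficients. Since the same hypotheses were shown in \cite{BY1} to force $S$ to be an ellipse in the case $\rho=\tfrac12$, and every ingredient used there (regularity; constancy of $u\wedge u^*$; tangency of $C_u$ to $S$ along $P_u$; the $C^1$ structure of Lemma \ref{prop}; the relation $1-2\rho^2\neq 0$ valid because $\rho\in M$ and hence $\rho\neq\tfrac1{\sqrt2}$) now holds verbatim for every $\rho\in M$, the argument of \cite{BY1} applies: one deduces that $s^*(\theta)=R_\gamma s(\theta)$ for a fixed linear map $R_\gamma$ (a ``rotation by a fixed angle'' in the affine structure induced by $C_u$), and then that $S$ is itself an ellipse; an ellipse as unit sphere means $X$ is an i.p.s.

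**Main obstacle.** The conceptual heart — and the step I expect to be most delicate — is passing from ``$C_u$ is tangent to $S$ at every vertex of $P_u$, for every $u\in S$'' to ``$S=C_u$''. A priori $S$ and $C_u$ agree and are tangent only on the finite vertex set $P_u\cup P_{-u}$ (some $2(2m+1)$ points), so one must propagate this agreement to all of $S$. This is exactly where the density statement (Step 4 of Lemma \ref{todos}) and the $C^1$-regularity (Lemma \ref{prop}) do the work: the set of $u$ for which ($*$) holds is dense, so one gets a dense family of ellipses each tangent to $S$ at $\geq 4$ common points; a compactness/continuity argument (the family of $\rho$-ellipses is continuous in $u$, all have the same area by the constancy of $u\wedge u^*$, and each is tangent to $S$ along its inscribed polygon) forces $C_u$ to be independent of $u$ and to coincide with $S$. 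Making this limiting argument airtight — ensuring no degeneration and that tangency at finitely many points, taken over a dense set with matched $C^1$ data, genuinely forces curve equality — is the crux; once it is in place, the reduction to \cite{BY1} and the conclusion that $X$ is an i.p.s. are routine. I would therefore organize the proof as: (1) collect the consequences of Lemmas \ref{todos} and \ref{prop}; (2) show $C_u$ is constant in $u$; (3) show that constant ellipse equals $S$; (4) conclude.
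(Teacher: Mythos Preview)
Your ``Key steps'' paragraph is exactly the paper's argument: collect from Lemmas~\ref{r}, \ref{todos}, and \ref{prop} that $X$ is smooth, that $u\wedge u^*$ is constant, that $\mathcal{A}(B_u^v)=\mathcal{A}(B_{u^*}^{v^*})$, that $(\ast)$ and $(\ast\ast)$ hold for \emph{every} $u$, and that $s,s^*$ are $C^1$ with $s'=p\,s^\perp$, $s^{*\prime}=q\,s^{*\perp}$; then rerun Theorem~3.1 of \cite{BY1} with the only change being the initial datum $s^*(0)=(\cos\tfrac{2k\pi}{2m+1},\sin\tfrac{2k\pi}{2m+1})$ in place of $(-\tfrac12,\tfrac{\sqrt3}{2})$. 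That is the entire proof in the paper; nothing about $C_u$ appears after the lemmas.

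Where you drift from the paper is the ``Main obstacle'' paragraph and your four-step plan. You propose, as the crux, an intermediate geometric step ``(2) show $C_u$ is constant in $u$; (3) show that constant ellipse equals $S$'', to be carried out by a compactness/continuity argument on the family $\{C_u\}$. The paper does not do this, and it is not needed: once the list of properties above is in hand, the ODE argument of \cite{BY1} already forces $S$ to be an ellipse, with no further appeal to the $\rho$-ellipses $C_u$. Conversely, if you \emph{could} show directly that $C_u$ is independent of $u$, you would be finished immediately (since $u\in C_u$ for every $u\in S$ gives $S\subseteq C_u$, and two centrally symmetric Jordan curves with one contained in the other coincide), and the reduction to \cite{BY1} would be superfluous --- so your plan, as written, is internally redundant. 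More to the point, the ``compactness/continuity'' sketch you give for constancy of $C_u$ is not an argument: knowing that each $C_u$ is tangent to $S$ at the $2(2m+1)$ points of $P_u\cup P_{-u}$, even for a dense set of $u$, does not by itself pin down $C_u$ without exactly the analytic control (constancy of $u\wedge u^*$, the tangent directions from $(\ast)$ and $(\ast\ast)$, $C^1$ smoothness) that already feeds the \cite{BY1} machinery.

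In short: keep your ``Key steps'' and drop steps (2)--(3) of your plan. The paper's proof is literally ``verify the hypotheses of \cite{BY1}, Theorem~3.1, for the present $\rho$'' --- which is what Sections~\ref{prelemma}--\ref{specialcase} and Lemma~\ref{prop} accomplish --- and then cite that argument.
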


\begin{proof}
Let $X$ be an i.p.s. such that the scalar product of $u,v\in X$ is $(u|v)$. It is easy to see that for any $u,v\in S$, $u\prec v$, the convex function
$$
F(t)=\|(1-t)u+tv\|^2=1-2t+2t^2+2t(1-t)(u|v)
$$
attains its minimum at $t=\tfrac12$ when $(u|v)<1$. Thus, $X$ fulfils (P-$\rho S$) for every $\rho\in (0,1)$.

In order to prove the converse, let us fixed a natural parametrization $s:[0,2\pi]\to S$ for $S$. The following conditions holds:
	\begin{enumerate}
		\item 	$X$ is smooth (Lemma \ref{b}).
		\item	If $u,v\in S$ such that $u\prec v$, then 
	 $u \wedge u^*=v\wedge v^*$; $\mathcal{A}(B_u^v)=\mathcal{A}(B_{u^*}^{v^*})$; $u\perp(1-2\rho^2)u+u^*$; and $u^*\perp -u-(1-2\rho^2)u^*$ (Lemma \ref{todos}, property (\ref{support}) for $u$, and property (\ref{support2}) for $u^*$).
	\item 
There exist  some continuous functions $p, q:[0, 2\pi] \to\mathbb{R}_+ $  such that $s'(\theta)=p(\theta)s^{\perp}(\theta)$ and $s^{*\prime}(\theta)=q(\theta)s^{*\perp}(\theta)$ (Lemma \ref{prop}).

	\end{enumerate}
Using (1), (2), and (3), the proof of the statement for the case $\rho=\tfrac12$ (Theorem 3.1 in \cite{BY1}) can be rewritten for $\rho\in M$ with only very slight and not significant changes. For example, the (non restrictive) initial data $s(0)=(1,0)$ and $s^*(0)=(-\frac{1}{2}, \frac{\sqrt{3}}{2})$ considered for $\rho=\tfrac12$ would be replaced by $s(0)=(1,0)$ and $s^*(0)=(\cos \tfrac{2k\pi}{2m+1}, \sin \tfrac{2k\pi}{2m+1})$.
\end{proof}

\section{Conclusion and Open Problem}\label{conclusion}
We conjecture that a real normed space $X$ is an i.p.s. if and only if there exists $0<\rho<1$ such that $X$ fulfils property (P-$\rho S$). 
If $2k<n$ and $n=3,4,...$, the case  $\rho\neq\sqrt{(1+\cos\tfrac{2k\pi}{n})/2}$ is proved in \cite{BY2}. The case $\rho=\sqrt{(1+\cos\tfrac{2k\pi}{n})/2}$ is proved in this paper when $n$ is odd (for $n=3$, $\rho=\tfrac12$, it was solved previously in \cite{BY1}), but it is left open  when $n$ is even. For this unsolved situation, the results of Section \ref{prelemma} and Section \ref{ellipsepolygon}, as well as some assertions  of Lemma \ref{punto} ((4), (5); also (1) and (2) considering $n=2m$) remain true. Besides, regarding (3) of Lemma \ref{punto}, it is easy to see that $P_v=P_{-v}$ and $P_v\cap P_w=\emptyset$ when $n$ is even. Nevertheless, the authors are not able to prove that the vertices of $P_v$ and $P_w$ split $B$ into $2n$ disjoint sectors of equal area, which would be the property equivalent to (6) of Lemma \ref{punto}.

\section*{Acknowledgement}
The second and third authors are very grateful to the first author for his work along more than 30 years in the University of Extremadura and, in general, for the example of his life. This paper is one of the last mathematical contributions of professor Carlos Ben\'{\i}tez, who died on March 7th, 2014.

\end{document}